\newtheorem{theorem}{Theorem}[section]
\newtheorem{lemma}[theorem]{Lemma}
\newtheorem{corollary}[theorem]{Corollary}
\newtheorem{definition}[theorem]{Definition}
\newtheorem{example}[theorem]{Example}
\newtheorem{remark}[theorem]{Remark}
\newtheorem{proposition}[theorem]{Proposition}
\newtheorem{question}[theorem]{Question}
\newtheorem{setup}[theorem]{Setup}
\newcommand{\p}{\mathbf{P}}
\newcommand{\sk}{{\ensuremath{\sf k }}}
\newcommand{\m}{\ensuremath{\mathfrak m}}
\newcommand{\ZZ}{\ensuremath{\mathbb{Z}}}
\newcommand{\QQ}{\ensuremath{\mathbb{Q}}}
\newcommand{\Y}{\ensuremath{\mathbf{Y}}}
\newcommand{\Z}{\ensuremath{\mathbf{Z}}}
\newcommand{\y}{\ensuremath{\mathbf{y}}}
\newcommand{\z}{\ensuremath{\mathbf{z}}}
\DeclareMathOperator{\ann}{ann}
\DeclareMathOperator{\Tor}{Tor}
\DeclareMathOperator{\soc}{soc}
\DeclareMathOperator{\im}{im}
\DeclareMathOperator{\edim}{edim}
\DeclareMathOperator{\type}{type}
\DeclareMathOperator{\gr}{gr}
\begin{document}

\title{\textbf{Associated Graded Rings and Connected Sums}}

\author[H. Ananthnarayan]{H. Ananthnarayan}
\address{Department of Mathematics, I.I.T. Bombay, Powai, Mumbai 400076.}
\email{ananth@math.iitb.ac.in}

\author[E. Celikbas]{Ela Celikbas}
\address{Department of Mathematics, West Virginia University, Morgantown, WV 26506.}
\email{ela.celikbas@math.wvu.edu}

\author[Jai Laxmi]{Jai Laxmi}
\address{Department of Mathematics, I.I.T. Bombay, Powai, Mumbai 400076.}
\email{jailaxmi@math.iitb.ac.in}

\author[Z. Yang]{Zheng Yang}
\address{Department of Mathematics, Miami University, Oxford, OH 45056.}
\email{yangz15@miamioh.edu}

\subjclass[2010]{Primary 13A30, 13D40, 13H10}

\keywords{associated graded ring, fibre product, connected sum, short Gorenstein ring, stretched Gorenstein ring, Poincar$\acute{\text{e}}$ series.}

\thanks{Z. Yang was partially supported by NSF grant DMS-1103176.}

\begin{abstract}
In 2012, Ananthnarayan, Avramov and Moore
gave a new construction of Gorenstein rings from two Gorenstein local rings, called their connected sum.
In this article, we investigate conditions
on the associated graded ring of a Gorenstein Artin local ring $Q$, which force it to be a connected sum over its residue field. In particular, we recover some results regarding short, and stretched, Gorenstein Artin rings. Finally, using these decompositions, we obtain results about the rationality of the Poincar$\acute{\text{e}}$ series of $Q$.
\end{abstract}

\maketitle

\section*{Introduction}

Given Gorenstein Artin local rings $R$ and $S$ with common residue field $\sk$, and the natural surjective maps $R \overset{\pi_R}\longrightarrow \sk \overset{\pi_S}\longleftarrow S$, a connected sum is an appropriate quotient of the fibre product (or pullback) $R\times_\sk S = \{(a,b) \in R \times S: \pi_R(a) = \pi_S(b)\}$.
Lescot, in \cite{Le}, proves that the connected sum is also Gorenstein Artin. A more general version of a connected sum of two Gorenstein local rings having the same dimension is defined by Ananthnarayan, Avramov and Moore in \cite{AAM}. They prove that this new construction is also a Gorenstein local ring of the same dimension.  

It is natural to ask when a given Gorenstein Artin local ring $Q$ can be decomposed as a connected sum over its residue field $\sk$.
In the equicharacteristic case, this question was studied by Smith and Stong in \cite[Section 4]{SS} from a geometric point of view for {\it projective bundle ideals}, and also by using
inverse systems by Buczy\'nska et al. in \cite{BBKT} using polynomials that are
{\it direct sums}, corresponding to {\it apolar} Gorenstein algebras. The study of intrinsic properties of the ring, under which it is indecomposable as a connected sum, 
and other conditions on its defining ideal characterizing decomposability, are given in \cite{ACJY}. 

In \cite[Proposition 4.5]{ACJY}, it is shown that if $Q$ is a standard graded $\sk$-algebra, the decomposability of $Q$ as a connected sum over $\sk$ can be characterized in terms of the quotient of $Q$ by its socle. This leads us to look at the non-graded case, and in particular to the study of the associated graded ring. The main focus of this paper is to understand the connections of properties of the associated graded ring of $Q$ with the decomposability of $Q$ as a connected sum over $\sk$.

In Section \ref{GrIndec}, we study the associated graded ring $G$ of a Gorenstein Artin local ring $Q$ which can be decomposed as a connected sum over its residue field $\sk$. In particular, in Proposition \ref{Prop2}, we see that if the components of $Q$ have different Loewy lengths, then $G$ can be decomposed as a fibre product over $\sk$. As seen in examples in Section \ref{GrFP}, the converse is not necessarily true, i.e., if $G$ decomposes as a fibre product over $\sk$, then $Q$ need not decompose as a connected sum over $\sk$. 

In light of this, one can ask when the converse is true. In order to study this, assuming $G \simeq A \times_\sk B$, we impose further conditions on $A$ and $B$ in Setup \ref{setup}, and investigate properties of $Q$ in this case in Theorem \ref{EXJ}. The motivation for Setup \ref{setup} comes from the following:
In \cite{Sa}, Sally proves a structure theorem for stretched Gorenstein rings and in \cite{ER}, Elias and
Rossi give a similar structure theorem for short Gorenstein $\sk$-algebras with some assumptions on the residue field.
In particular, these structure theorems show
that the ring $Q$ can be written as a connected sum of a graded Gorenstein Artin ring $R$ with the same Loewy length
as $Q$, and a Gorenstein Artin ring $S$ with Loewy length less than three.
In either case, using a construction of Iarrobino we see in Proposition \ref{maincor} that $G \simeq A \times_\sk B$, where $A$ is graded Gorenstein, and $B$ has Loewy length 1. 

In Section \ref{Gr}, assuming Setup \ref{setup}, we look for conditions on $G \simeq A \times_\sk B$ that force $Q$ to be a connected sum. 
Using the characterization of connected sums (Theorem \ref{Characterization}), and the results from Section \ref{GrFP}, 
we show that $Q$ is a connected sum, in general, when the Loewy length of $B$ is one, and in some special cases, when the Loewy length of $B$ is two. This gives us results regarding the Poincar$\acute{\text{e}}$ series of $Q$.
(See Theorem \ref{StructureTheorem}, Corollary \ref{Loewy(B)=2}, and their corollaries in Section \ref{SS}). Example \ref{4.16} shows that if the Loewy length of $B$ is at least 2, then $Q$ need not be a connected sum.

In Section \ref{SS}, we use Theorem \ref{StructureTheorem} to give applications to short and stretched
Gorenstein Artin local rings. In particular, we show that these rings, when they are not graded, are
non-trivial connected sums, and derive some consequences without any restrictions on the residue field. We also identify some Gorenstein Artin local rings with rational Poincar$\acute{\text{e}}$ series.

The first section contains results regarding the main tools used in the rest of the paper. It also contains results about fibre products and connected sums,
including a characterization of connected sums in terms of a minimal generating set of maximal ideals (Theorem \ref{Characterization}).

\section{Preliminaries}\label{Preliminaries}

\subsection{Notation}\hfill{}

\begin{enumerate}[{\rm a)}]
\item  If $T$ is a local ring and $M$ is a $T$-module, $\lambda(M)$ and $\mu(M)$ respectively denote the {\it length} and the
{\it minimal number of generators} of $M$ as a $T$-module.

\item Let $(T,\m,\sk)$ be an Artinian local ring. Then $\edim(T)$
denotes the {\it embedding dimension} of $T$ which is equal to $\mu(\m)$.
The {\it socle} of $T$ is $\soc(T) = \ann_T(\m)$. Moreover, the {\it type} of $T$ is $\type(T) = \dim_{\sk}(\soc(T))$,
and the \emph{Loewy length} of $T$ is $\ell\ell(T) = \max\{n: \m^n \neq 0\}$.\footnote{If $T$ is also Gorenstein,
its Loewy length is also referred to as {\it socle degree} in the literature.}

Observe that $T$ is not a field if and only if $\ell\ell(T) \geq 1$. Furthermore, if $T$ is Gorenstein, then $\soc(T) \subset \m^2$ if and only if $\ell\ell(T) \geq 2$.

\item If $\sk$ is a field, a {\it graded $\sk$-algebra} $G$ is a graded ring $G = \oplus_{i \geq 0} G_i$ with
$G_0 = \sk$. It has a unique homogeneous maximal ideal, $\m_G = \oplus_{i \geq 1} G_i$. We say $G$ is {\it standard graded} if $\m_G$ is generated by $G_1$.

\item  For positive integers $m$ and $n$, $\Y$ and $\Z$ denote the sets of indeterminates $\{Y_1, \ldots, Y_m\}$ and
$\{Z_1,\ldots,Z_n\}$ respectively, and $\Y \cdot \Z$ denotes $\{Y_iZ_j: 1 \leq i \leq m, 1 \leq j \leq n\}$.
\end{enumerate}

\subsection{Associated graded rings}
\begin{definition} Let $(T,\m,\sk)$ be a Noetherian local ring.\begin{enumerate}[{\rm a)}]
\item The {\it graded ring associated to the maximal ideal} $\m$ of
$P$, denoted $\gr_{\m}(T)$, {\rm (}or simply $\gr(T)${\rm)}, is defined as $\gr(T) \simeq \oplus_{i=0}^{\infty} (\m^i/\m^{i+1})$.

\item We define the {\it Hilbert function} of $T$ as $H_T(i) = \dim_{\sk}(\m^i/\m^{i+1})$ for $i \geq 0$.

\item If  $T$ is an Artinian ring with $\ell\ell(T) = s$, then we write the Hilbert function of $T$
as $H_T=(H_T(0),\ldots,H_T(s))$.

Furthermore, if $T$ is Gorenstein, we say that $T$ is \emph{stretched} if
$H_T =  (1,h,1,\cdots,1)$, i.e., $\m_T^2$ is principal and $T$ is \emph{short} if $\m^4_T = 0$.
\end{enumerate}
\end{definition}

\begin{remark}\label{GR}{\rm With notation as above, let $G = \gr(T)$ and
$G_{\geq n} = \bigoplus_{i=n}^{\infty} \m^i/\m^{i+1}$ for $n \geq 0$.

\begin{enumerate}[a)]
\item For each $n \geq 0$, $G_{\geq n}$ is the $n$th power of the homogeneous maximal ideal $\m_G$ of $G$ and a minimal
generating set of $G_{\geq n}$ lifts to a minimal generating set of $\m^n$.

In particular, if $T$ is Artinian local, then so is $G$. Furthermore, $\lambda(T)  = \lambda(G)$ and $\ell\ell(T) = \ell\ell(G)$.

\item For each $x \in T \setminus \{0\}$, there exists a unique $i \in \ZZ$ such that $x \in \m^i \setminus \m^{i+1}$. The
{\it initial form} of $x$ is the element $x^* \in G$ of degree $i$, that is, the image of $x$ in $\m^i/\m^{i+1}$.

\item For an ideal $K $ of $T$, $K^*$ denotes the ideal of $G$ defined by $\langle x^*: x \in K \rangle$.
Note that, if $R \simeq T/K$, then $\gr(R) \simeq G/K^*$.

\item If $T$ is a regular local ring, $\m = \langle x_1,\ldots,x_d\rangle$, then $x_1^*, \ldots, x_d^* \in G$ are algebraically independent, and hence $G$ is isomorphic to a polynomial ring over $\sk$ in $d$ variables.
\end{enumerate}
}\end{remark}

\begin{remark}[Iarrobino's Construction(\cite{Ia})]\label{Iarrobino} {\rm
Let $(Q,\m_Q,\sk)$ be a Gorenstein Artin local ring with $\ell\ell(Q)=s$, and let $G = \gr(Q)$ be its associated graded ring.
Iarrobino showed that
$$C = \bigoplus_{i \geq 0}  \frac{(0:_Q \m_Q^{s-i})\cap \m_Q^i}{(0:_Q \m_Q^{s-i})\cap \m_Q^{i+1}}$$
is an ideal in $G$. He also proved that $Q_0 = G/C$ is a graded Gorenstein quotient of $G$ with
$\deg(\soc(Q_0)) = s$.

Note that $H_{Q_0}(i) = H_G(i)$ for $i \geq s-1$ since $C_i = 0$ for $i \geq s-1$.
}\end{remark}

\subsection{Cohen Presentations and Poincar$\mathbf{\acute{\text{e}}}$ Series}

\begin{definition}
{\rm Let $T$ be a local ring. We say that $\widetilde T / I_T$ is a {\it Cohen presentation} of $T$ if $(\widetilde T, \m_{\widetilde T}, \sk)$ is a complete regular local ring and $I_T \subset \m_{\widetilde T}^2$ is an ideal in $\widetilde T$ such that $T \simeq \widetilde T / I_T$.
}\end{definition}

\begin{remark}\hfill{}{\rm

\begin{enumerate}[a)]
\item By Cohen's Structure Theorem, every complete Noetherian local ring has a Cohen presentation.
\item Let $(\widetilde T, \m_{\widetilde T}, \sk)$ be a complete regular local ring, $I$ be an ideal in $\widetilde T$. Set $T =  \widetilde T / I$. Then $\widetilde T / I$ is a Cohen presentation of $T$ if and only if $\edim(\widetilde T) = \edim(T)$.
\end{enumerate}
}\end{remark}

\begin{definition}\label{PSdef}{\rm For a local ring $(T, \m, \sk)$, the {\it Poincar$\acute{\text{e}}$ series} of $T$, is the formal power series
$$\p^T(t)=\sum_{i\geq 0} \beta_i^Tt^i, \;\;\; \text{with}\;\; \beta_i^T=\dim_{\sk}\left( \Tor_i^T(\sk, \sk)\right).$$
}\end{definition}

\begin{remark}[Minimal Number of Generators]\label{PS2}{\rm
Let $(T,\m,\sk)$ be a complete Noetherian local ring with $\edim(T) = d$, and $T = \widetilde T/I_T$ be a Cohen presentation.
By \cite[Corollary 7.1.5]{Av}, we have
$$\beta_1^T =d\quad\text{ and }
\quad\mu(I_T) =\beta_2^T - \binom{\beta_1^T}{2} = \beta_2^T - \binom{d}{2}.$$
}\end{remark}

Next we list some properties of the Poincar$\acute{\text{e}}$ series of a Gorenstein Artin local ring.

\begin{remark}[Poincar$\mathbf{\acute{\text{e}}}$ Series of Gorenstein Rings]\label{PS} \hfill{}\\{\rm
Let $(T,\m,\sk)$ be a Gorenstein Artin local ring and $\overline T$ represent the quotient $T/\soc(T)$.

\begin{enumerate}[a)]
\item If $\edim(T) \leq 4$, then $\p^T(t)$ is rational. See \cite[Theorem 6.4]{AKM}.

\item If $\edim(T) \geq 2$, then
$\p^{T}(t) = \p^{\overline T}(t) + t^{-2}$,  by \cite[Theorem 2]{LA}.

\item If $\ell\ell(T) = 2$ and $\edim(T) = n$, then $\p^{T}(t) = (1- t)^{-1}$ if $n = 1$. If $n \geq 2$, then $\p^{T}(t) = (1 - nt + t^2)^{-1}$. In particular, $\p^T(t)$ is a rational function of $t$.
\end{enumerate}
}\end{remark}

\subsection{Connected Sums}

We first define the fibre product of local rings $(R,\m_R,\sk)$ and $(S,\m_S,\sk)$ over $\sk$. If neither of them is a field, then their fibre product is not Gorenstein. Hence, we define an appropriate quotient called a connected sum which is Gorenstein. For more details about the contents of this subsection, see \cite[Section 2]{AAM}, \cite[Chapter 4]{HAthesis}, and \cite{ACJY}.
 
\begin{definition}\label{FPdef}
Let $(R,\m_R,\sk)$ and $(S,\m_S,\sk)$ be local rings. The {\it fibre product} $R$ and $S$ over $\sk$ is the ring
$R\times_{\sk} S = \{(a,b) \in R \times S: \pi_R(a) = \pi_S(b)\}$,
where $\pi_R$ and $\pi_S$ are the natural projections from $R$ and $S$ respectively onto $\sk$.
\end{definition}

\begin{remark}\label{FP} {\rm
A Noetherian local ring $(P,\m_P,\sk)$ is decomposable as a fibre product over $\sk$ if and only if $\m_P$ is minimally generated by $\{y_1\ldots,y_m,z_1,\ldots,z_n: m, n \geq 1\}$, where $y_iz_j = 0$. In this case, we have $P \simeq R\times_\sk S$, where $R \simeq P/\langle \z\rangle$ and $S \simeq P/\langle \y\rangle$. 

On the other hand, with $R$ and $S$ as in Definition \ref{FPdef}, if $P = R\times_{\sk} S$, then, by identifying $\m_R$ with $\{(a,0): a \in \m_R\}$ and $\m_S$ with $\{(0,b): b \in \m_S\}$, we see that $P$ is a local ring with maximal ideal $\m_P = \m_R \times \m_S$. Hence, we see that $\edim(P) = \edim(R) + \edim(S)$, $\soc(P)=\soc(R)\oplus \soc(S)$ and by \cite[(1.0.3)]{AAM}, $\gr(P) \simeq \gr(R) \times_\sk \gr(S)$. This implies that for $i \geq 1$, we have $H_P(i) = H_R(i) + H_S(i)$. Furthermore, $R \simeq P/\m_S$ and $S \simeq P/\m_R$.
}\end{remark}

\begin{definition}\label{CSdef}
Let $(R,\m_R,\sk)$ and $(S,\m_S,\sk)$ be Gorenstein Artin local rings different from $\sk$.
Let $\soc(R) = \langle\delta_R\rangle$, $\soc(S) = \langle\delta_S\rangle$. Identifying $\delta_R$ with
$(\delta_R, 0)$ and $\delta_S$ with $(0, \delta_S)$, a {\it connected sum} of $R$ and $S$ over $\sk$,
denoted $R\#_\sk S$, is the ring $R\#_\sk S = (R\times_\sk S)/\langle\delta_R - \delta_S\rangle$.
\end{definition}

A connected sum of $R$ and $S$ is Gorenstein, see \cite[Proposition 4.4]{Le}, or \cite[Theorem 2.8]{AAM}. 
Connected sums of $R$ and $S$ also depend on
the generators of the socle $\delta_R$ and $\delta_S$ chosen.
In \cite[Example 3.1]{AAM} it is shown that for $R = \QQ[Y]/\langle Y^3\rangle$ and $S = \QQ[Z]/\langle Z^3\rangle$ , the connected sums  $Q_1 = (R \times_\sk S) / \langle y^2 - z^2\rangle$ and $Q_2 = (R \times_\sk S) /\langle y^2 - 5 z^2\rangle$ of are not isomorphic as rings.
Finally, observe that every Gorenstein Artin local ring $(Q,\m_Q,\sk)$ can be decomposed trivially over $\sk$ as $Q \simeq Q \#_\sk \frac{\sk[Z]}{\langle Z^2 \rangle}$. Hence the following is defined in \cite{ACJY}.

\begin{definition}{\rm
Let $(Q,\m,\sk)$ be a Gorenstein Artin local ring. We say that $Q$ {\it decomposes as a connected sum}
over $\sk$ if there exist Gorenstein Artin local rings $R$ and $S$ such that
$Q \simeq R \#_\sk S$ and $R \not\simeq Q \not\simeq S$.
In this case, we call $R$ and $S$ the {\it components} in a connected sum decomposition of $Q$, and say that
$Q \simeq R \#_\sk S$ is a non-trivial decomposition.

If $Q$ cannot be decomposed as a connected sum over $\sk$,
we say that $Q$ is {\it indecomposable} as a connected sum over $\sk$.
}\end{definition}

The following theorem gives a characterization of connected sums over $\sk$ which was proved in \cite{ACJY}; parts (a) through (d) have been established in \cite{AAM} and \cite{ACJY}. In this paper we prove part (e) in Proposition \ref{Prop2}.

\begin{theorem}[Connected Sums over $\sk$]\label{Characterization}
Let $(Q,\m_Q,\sk)$ be a Gorenstein Artin local ring with $\ell\ell(Q) \geq 1$. Then $Q$ can be decomposed nontrivially as a connected sum over $\sk$ if and only if $\m_Q = \langle y_1,\ldots, y_m, z_1,\ldots, z_n \rangle$, $m$, $n \geq 1$, with $\y \cdot \z = 0$.

If $Q$ decomposes as a connected sum over $\sk$, then we can write $Q \simeq R \#_\sk S$, such that
$R =\widetilde Q/J_R$, $S = \widetilde Q/J_S$ with
$J_R = (I_Q \cap \langle \Y \rangle) + \langle \Z \rangle$, and
$J_S = (I_Q \cap \langle\Z\rangle) + \langle \Y \rangle$, where $\widetilde Q/I_Q$ is a Cohen presentation of $Q$. Furthermore, the following hold:

\begin{enumerate}[{\rm a)}]
\item $\lambda(Q) = \lambda(R) + \lambda(S) - 2$ and $\edim(Q) = \edim(R) + \edim(S)$.
\vskip 2pt

\item For $0 < i < \min\{\ell\ell(S),\ell\ell(R)\}$, we have $H_Q(i) = H_R(i) + H_S(i) \leq \binom{m + n-2+i}{i}+1$.
\vskip 5pt

\item $I_Q = (J_R \cap \langle \Y \rangle) + (J_S \cap \langle \Z \rangle) + \langle \Y \cdot \Z\rangle+ \langle \Delta_R-  \Delta_S\rangle$, where
$\Delta_R \in \langle\Y\rangle$ and $\Delta_S \in \langle\Z\rangle$ are such that their respective
images $\delta_R \in R$ and $\delta_S \in S$ generate the respective socles.
\vskip 2pt

\item $\mu(I_Q) = \mu\left(J_R/\langle \Z \rangle\right) + \mu\left(J_S/\langle \Y \rangle\right) + mn + \phi_{m,n}$, and
{\large $\frac{1}{\p^{Q}(t)} =  \frac{1}{\p^R(t)} + \frac{1}{\p^S(t)}$} $- 1 -\phi_{m,n} t^2$
\vskip 2pt
where $\phi_{m,n}$ is $1$ for $m$, $n \geq 2$, $\phi_{1,1} = -1$, and $\phi_{m,n} = 0$ otherwise.
\vskip 2pt
\item If $\ell\ell(R) > \ell\ell(S) \geq 2$, then $\gr(Q) \simeq \gr(R) \times_\sk \gr\left(S/\soc(S)\right)$.
\end{enumerate}\end{theorem}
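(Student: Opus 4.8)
The plan is to exhibit $\gr(Q)$ as a cyclic quotient of $\gr(P)$, where $P = R\times_\sk S$, and to show that the resulting ideal of initial forms is exactly the one defining $\gr(R)\times_\sk\gr(S/\soc(S))$. Throughout set $s = \ell\ell(R)$ and $t = \ell\ell(S)$, so $s > t \ge 2$; as in Definition~\ref{CSdef} identify $\delta_R$ with $(\delta_R,0)$ and $\delta_S$ with $(0,\delta_S)$ in $P$, so that $Q = P/\langle\delta_R-\delta_S\rangle$, and recall from Remark~\ref{FP} that $\m_P = \m_R\times\m_S$, that $\soc(P) = \soc(R)\oplus\soc(S)$, and that $\gr(P)\simeq\gr(R)\times_\sk\gr(S)$. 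I will also use the elementary pullback identity: for local rings $(A,\m_A,\sk)$ and $(B,\m_B,\sk)$ and any $b\in\m_B$,
\[
(A\times_\sk B)/\langle(0,b)\rangle \;\simeq\; A\times_\sk(B/\langle b\rangle),
\]
which one proves by checking that $(a,c)\mapsto(a,\bar c)$ is a well-defined surjection onto the pullback (its target still has residue field $\sk$) with kernel exactly $0\times\langle b\rangle = \langle(0,b)\rangle$.

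First I would compute the ideals of initial forms of $\langle\delta_R-\delta_S\rangle$ and of $\langle(0,\delta_S)\rangle$ in $\gr(P)$. Since $\delta_R\in\soc(R) = \ann_R\m_R$ and $\delta_S\in\soc(S) = \ann_S\m_S$, a one-line check gives $\m_P\cdot(\delta_R-\delta_S) = 0$ and $\m_P\cdot(0,\delta_S) = 0$; hence both $\langle\delta_R-\delta_S\rangle$ and $\langle(0,\delta_S)\rangle$ are one-dimensional, equal to $\sk(\delta_R-\delta_S)$ and $\sk(0,\delta_S)$ respectively. It follows that each corresponding star ideal is principal, generated by the initial form of its single generator. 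Now the hypothesis $s > t$ enters: for a Gorenstein Artin local ring $\soc(R) = \m_R^s$, so $\delta_R\in\m_R^s\subseteq\m_R^{t+1}$, while $\delta_S\in\m_S^t\setminus\m_S^{t+1}$; therefore $\delta_R-\delta_S$ has order $t$ in $P$ and $(\delta_R-\delta_S)^* = -\delta_S^*$, a nonzero element of degree $t$. Consequently
\[
\langle\delta_R-\delta_S\rangle^* \;=\; \langle\delta_S^*\rangle \;=\; \langle(0,\delta_S)\rangle^*.
\]

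With this equality in hand, the conclusion is a short chain of isomorphisms:
\[
\gr(Q) = \gr\big(P/\langle\delta_R-\delta_S\rangle\big) \simeq \gr(P)/\langle\delta_R-\delta_S\rangle^* = \gr(P)/\langle(0,\delta_S)\rangle^* \simeq \gr\big(P/\langle(0,\delta_S)\rangle\big) \simeq \gr\big(R\times_\sk(S/\soc(S))\big) \simeq \gr(R)\times_\sk\gr(S/\soc(S)),
\]
where the first and third isomorphisms are Remark~\ref{GR}(c), the middle equality is the displayed computation (together with $\langle(0,\delta_S)\rangle = \soc(S)$), the fourth is the pullback identity above with $b = \delta_S$, and the last is again the compatibility $\gr(R\times_\sk -)\simeq\gr(R)\times_\sk\gr(-)$ from Remark~\ref{FP}. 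Since $\ell\ell(S)\ge 2$, the ring $S/\soc(S)$ is not a field, so this is a genuine fibre-product decomposition.

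The crux of the argument, and the only place the inequality $\ell\ell(R) > \ell\ell(S)$ is used, is the initial-form computation $(\delta_R-\delta_S)^* = -\delta_S^*$: the length inequality forces the "glued" relation $\delta_R-\delta_S$ to degenerate in the associated graded ring to a pure $S$-side relation, which is precisely what turns $\gr(Q)$ into a fibre product. When $\ell\ell(R) = \ell\ell(S)$ the initial form is instead the mixed element $\delta_R^*-\delta_S^*$ and the argument breaks down; indeed $\gr(Q)$ need not then be a fibre product, as the examples of Section~\ref{GrFP} show. A secondary subtlety to handle carefully is the passage from "$\langle\delta_R-\delta_S\rangle$ is one-dimensional" to "$\langle\delta_R-\delta_S\rangle^*$ is principal, generated by $(\delta_R-\delta_S)^*$": this is false for general ideals, but valid here precisely because $\m_P$ annihilates the generator, and the same observation simultaneously yields $\langle(0,\delta_S)\rangle^* = \langle\delta_S^*\rangle$, which is what makes the two star ideals agree.
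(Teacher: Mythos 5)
Your argument for part (e) is correct and follows essentially the same route as the paper's own proof (Proposition~\ref{Prop2}): pass to $P=R\times_\sk S$, use $\gr(Q)\simeq\gr(P)/\langle\delta_R-\delta_S\rangle^*$, exploit $\ell\ell(R)>\ell\ell(S)$ to compute the initial form as a pure $S$-side socle element, and peel off the fibre-product factor via $\gr(P)\simeq\gr(R)\times_\sk\gr(S)$. You merely make explicit two points the paper treats tersely (that $\langle\delta_R-\delta_S\rangle^*$ is principal because $\m_P$ annihilates the generator, and the pullback identity used to split off the factor); parts (a)--(d) are cited from prior work in the paper as well, so your exclusive focus on (e) is the right one.
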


Parts (b) and (d) of Theorem~\ref{Characterization} give us some conditions to determine when a Gorenstein ring is indecomposable as a connected sum.

\begin{remark}\label{indecomp.rmk} {\rm Let $(Q,\m_Q,\sk)$ be a Gorenstein Artin local ring with $\edim(Q)=d$. Then $Q$ is indecomposable over $\sk$ if one of the following condition holds:
\begin{enumerate}[{\rm a)}]
\item $Q$ is a complete intersection ring and $d\geq 3$ (see \cite[Theorem 3.6]{ACJY}).

\item $H_Q(2)\geq \binom{d}{2}+2$. (see \cite[Theorem 3.9]{ACJY}).
\end{enumerate}
 }
\end{remark}

\section{Associated Graded Rings and Indecomposibility}\label{GrIndec}
The following basic property of the associated graded ring is used to obtain conditions for indecomposibility.

\begin{proposition}\label{Prop2}
For Gorenstein Artin local rings $(R,\m_R,\sk)$ and $(S,\m_S,\sk)$, let $Q = R\#_\sk S$. If $\ell\ell(R)\neq \ell\ell(S)$, then the associated graded ring of $Q$ is a fibre product over $\sk$.\par
Moreover, if $R$ and $S$ are standard graded $\sk$-algebras with $\ell\ell(R),\ell\ell(S)\geq 2$, then the nontrivial connected sum $R\#_{\sk}S$ is not standard graded.
\end{proposition}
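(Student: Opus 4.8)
The plan is to handle the two assertions separately. For the first, since $Q=R\#_\sk S=S\#_\sk R$ we may assume $s:=\ell\ell(R)>t:=\ell\ell(S)$, and moreover that $t\geq 2$ (if $t=1$ then $S\simeq\sk[z]/(z^2)$ and $Q\simeq R$, so there is nothing new). The two facts that drive the argument are $\gr(R\times_\sk S)\simeq\gr(R)\times_\sk\gr(S)$ (Remark~\ref{FP}) and $\gr(Q)=\gr((R\times_\sk S)/\langle\delta_R-\delta_S\rangle)\simeq\gr(R\times_\sk S)/\langle\delta_R-\delta_S\rangle^{*}$ (Remark~\ref{GR}(c)); together they reduce the problem to identifying the initial ideal $\langle\delta_R-\delta_S\rangle^{*}$ inside $\gr(R)\times_\sk\gr(S)$.

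First I would observe that, since $\soc(R\times_\sk S)=\soc(R)\oplus\soc(S)$ (Remark~\ref{FP}), the element $\delta_R-\delta_S$ is annihilated by $\m_{R\times_\sk S}$, so $\langle\delta_R-\delta_S\rangle=\sk\cdot(\delta_R-\delta_S)$ and hence $\langle\delta_R-\delta_S\rangle^{*}=\langle(\delta_R-\delta_S)^{*}\rangle$. Next, using $\m^{i}_{R\times_\sk S}=\m_R^{i}\oplus\m_S^{i}$ for $i\geq 1$, the element $\delta_R$ has order $s$ and $\delta_S$ has order $t<s$, so $(\delta_R-\delta_S)^{*}=-\delta_S^{*}$ lies entirely in the $\gr(S)$-component; since $\delta_S$ generates $\m_S^{t}=\soc(S)$, one gets $\langle\delta_S^{*}\rangle=\gr(S)_t=(\m_S^{t})^{*}$, whence $\gr(S)/\langle\delta_S^{*}\rangle\simeq\gr(S/\soc(S))$ by Remark~\ref{GR}(c). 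Finally I would invoke the elementary fact that for a fibre product $A\times_\sk B$ and $b\in\m_B$ one has $(A\times_\sk B)/\langle b\rangle\simeq A\times_\sk(B/\langle b\rangle)$ (cross products vanish, so $\langle b\rangle$ in $A\times_\sk B$ is the ideal of $B$ generated by $b$, and the evident projection is an isomorphism); applying this with $b=\delta_S^{*}$ gives $\gr(Q)\simeq\gr(R)\times_\sk\gr(S/\soc(S))$, a genuine fibre product over $\sk$ because $R\neq\sk$ and, as $t\geq 2$, $S/\soc(S)\neq\sk$. This simultaneously proves Theorem~\ref{Characterization}(e). I expect the one point needing care to be the bookkeeping that shows $\ell\ell(R)\neq\ell\ell(S)$ is exactly what confines $(\delta_R-\delta_S)^{*}$ to a single factor: when the Loewy lengths are equal one instead gets $\delta_R^{*}-\delta_S^{*}$, which mixes the two components, and the quotient is a connected sum of graded rings rather than a fibre product.

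For the second assertion we remain in the situation of the first (so $\ell\ell(R)\neq\ell\ell(S)$), now with $R$ and $S$ standard graded and both Loewy lengths at least $2$; relabel so that $\ell\ell(R)>\ell\ell(S)\geq 2$. Since $R$ is standard graded, $\gr(R)\simeq R$; since $S$ is standard graded, $\soc(S)$ is its top homogeneous component, so $S/\soc(S)$ is again standard graded and $\gr(S/\soc(S))\simeq S/\soc(S)$. Hence the first part gives $\gr(Q)\simeq R\times_\sk(S/\soc(S))$, a fibre product of two local rings neither of which is a field, and therefore not Gorenstein by the fact recalled in Section~\ref{Preliminaries}. On the other hand, a standard graded $\sk$-algebra is isomorphic to its associated graded ring with respect to the maximal ideal; so were $Q$ standard graded, $\gr(Q)$ would be isomorphic to $Q$ and hence Gorenstein, contradicting that this fibre product is not. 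Therefore $Q$ is not standard graded. (The connected sum is automatically nontrivial: $\lambda(S)\geq 3$ forces $\lambda(Q)=\lambda(R)+\lambda(S)-2>\lambda(R)$ and likewise $>\lambda(S)$, so $Q\not\simeq R$ and $Q\not\simeq S$.) Once the first assertion is in hand, this part presents no real obstacle.
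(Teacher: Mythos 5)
Your proof is correct and follows essentially the same route as the paper's: pass to $P = R\times_\sk S$, use $\gr(P)\simeq\gr(R)\times_\sk\gr(S)$ and $\gr(Q)\simeq\gr(P)/\langle\delta_R-\delta_S\rangle^*$, observe that when $\ell\ell(R)>\ell\ell(S)$ the initial form of $\delta_R-\delta_S$ is $-\delta_S^*$ sitting in the $\gr(S)$ factor, conclude $\gr(Q)\simeq\gr(R)\times_\sk\gr(S/\soc(S))$, and then deduce the non-gradedness by noting this fibre product is non-Gorenstein. You fill in several details the paper passes over silently: the one-dimensionality of $\langle\delta_R-\delta_S\rangle$ (so that $K^* = \langle(\delta_R-\delta_S)^*\rangle$), the order computation via $\m_P^i = \m_R^i\oplus\m_S^i$, the quotient-of-a-fibre-product identity, and the explicit length count showing the connected sum is nontrivial. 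All of these are sound. Your aside about the case $\ell\ell(S)=1$ is a fair observation — there $S\simeq\sk[z]/(z^2)$, $Q\simeq R$, and the resulting decomposition $\gr(R)\times_\sk\sk$ is only a trivial fibre product; the paper's formula still holds but the statement is then vacuous, which is why the substantive use of this proposition (Theorem~\ref{Characterization}(e)) carries the hypothesis $\ell\ell(S)\geq 2$. One small point of hygiene: the paper writes the quotient ideal as $\langle\delta_R - u\delta_S\rangle$ for a unit $u$, while you absorb the unit into the choice of $\delta_S$; that is legitimate since socle generators are only defined up to units, but it is worth saying so.
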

\begin{proof}
Let $P=R\times_{\sk}S$, $\soc(R)=\langle\delta_R\rangle$ and $\soc(S)=\langle\delta_S\rangle$. Since $Q\simeq P/\langle \delta_R - u\delta_S \rangle$ for some unit $u$ in $S$, we have $\gr(Q) \simeq \gr(P)/\langle \delta_R-u\delta_S \rangle^*$ by Remark \ref{GR}(c). Recall that, by Remark \ref{FP}, we have $\gr(P) \simeq \gr(R) \times_\sk \gr(S)$.

Without loss of generality, we may assume that $\ell\ell(R) > \ell\ell(S)$.
Hence $\langle \delta_R- u\delta_S \rangle^* = \langle u\delta_S \rangle^*$. Thus we see that
$\gr(Q) \simeq (\gr(R)\times_\sk \gr(S))/\langle u \delta_S \rangle^*\simeq \gr(R) \times_\sk \gr(S/\langle \delta_S \rangle)$.

Now, note that $\gr(R) \neq \sk \neq
\gr(S/\langle \delta_S \rangle)$ for standard graded $\sk$-algebras $R$ and $S$ with $\ell\ell(R) > \ell\ell(S) \geq 2$. Since $\gr(Q)$  is decomposable nontrivially as a fibre product over $\sk$,
it is not Gorenstein. Thus $Q \not\simeq \gr(Q)$, and hence $Q$ is not standard graded.
\end{proof}

The condition $\ell\ell(R) \neq \ell\ell(S)$ is necessary in the above proposition. To see this, observe that if $R$ and $S$ are standard graded $\sk$-algebras with $\ell\ell(R) = \ell\ell(S) \geq 2$, then $Q = R \#_\sk S$ is a non-trivial connected sum. However, since $\ell\ell(R) = \ell\ell(S)$, $Q$ is standard graded, and therefore, $\gr(Q) \simeq Q$ is Gorenstein. Hence, $\gr(Q)$ is indecomposable as a fibre product over $\sk$, which leads us to the following:

\begin{question}\label{Q2.2}
Let $(Q,\m_Q,\sk)$ be a Gorenstein Artin local ring. Does the indecomposibility of $\gr(Q)$ as a fibre product over $\sk$ imply $\gr(Q)\simeq Q$, or at least force $\gr(Q)$ to be Gorenstein?
\end{question}

The following example shows that Question \ref{Q2.2} has a negative answer.

\begin{example}{\rm
Let $R = \QQ[Y_1,Y_2]/\langle Y_1^2Y_2, Y_1^3 - Y_2^2\rangle$,
$S = \QQ[Z]/\langle Z^5 \rangle$ and $Q \simeq R \#_\QQ S$.
Note that $\gr(Q) \simeq \QQ[Y_1,Y_2,Z]/\langle Y_1Z, Y_2Z, Y_1^2Y_2,Y_2^2,Y_1^4 - Z^4\rangle$. Set $G=\gr(Q)$. We see that $G$ is not Gorenstein since $\soc(G) = \langle Y_1Y_2,Z^4 \rangle$.
Note that $R$ is not standard graded, and $\ell\ell(R) = \ell\ell(S) = 4$. We now show that $G$ is indecomposable as a fibre product over $\QQ$.

Suppose $G \simeq A \times_\QQ B$ is a non-trivial fibre product, for some $\QQ$-algebras $A$ and $B$.
Since $\edim(G) = 3$, by Remark \ref{FP}, we may assume that $\edim(A) = 2$ and $\edim(B) =1$.
Moreover, $2=\type(G)=\type(A)+\type(B)$ forces $A$ and $B$ to be Gorenstein, and the fact that $\soc(G) = \soc(A) \oplus \soc(B)$ implies that $\ell\ell(A) = 4$ and $\ell\ell(B) = 2$ or vice versa.

If $\ell\ell(B) = 4$, then $H_B(i)=1$ for $i\leq 4$. Since $H_G = (1, 3, 3, 2, 1)$, we get $H_A=(1, 2, 2, 1)$ by Remark~\ref{FP}. This implies that $\ell\ell(A) = 3$ which cannot happen. Thus we must have $\ell\ell(A) = 4$ and $\ell\ell(B) = 2$.

Thus, if $G \simeq A \times_\QQ B$ is a non-trivial fibre product, we may assume that $A$ and $B$ are Gorenstein,
$\edim(A) = 2$, $\ell\ell(A) = 4$, and $B \simeq \QQ[V]/\langle V^3\rangle$. Therefore, we can write
$A \simeq \QQ[U_1,U_2]/\langle f_1,f_2\rangle$ where $U_1$, $U_2$ and $V$ are indeterminates over $\QQ$ and $$G \simeq \QQ[U_1,U_2,V]/\langle U_1V,U_2V,V^3,f_1,f_2 \rangle.$$

Let lower case letters denote the respective images of the
indeterminates in $G$ and $$\phi: \QQ[U_1,U_2,V]/\langle U_1V,U_2V,V^3,f_1,f_2 \rangle \longrightarrow
\QQ[Y_1,Y_2,Z]/\langle Y_1Z, Y_2Z, Y_1^2Y_2,Y_2^2,Y_1^4 - Z^4\rangle$$ be an isomorphism.
Write $\phi(u_1) = a_{11}y_1 + a_{12}y_2 + a_{13}z$, $\phi(u_2) = a_{21}y_1 + a_{22}y_2 + a_{23}z$ and
$\phi(v) = a_{31}y_1 + a_{32}y_2 + a_{33}z$. The fact that $\phi(v^3) = 0$ forces $a_{31} = 0 = a_{33}$. Hence $a_{32} \neq 0$ and
therefore, $a_{11} = 0 = a_{21}$ since $\phi(u_1v) = 0 = \phi(u_2v)$. This gives us a contradiction as $a_{i1} = 0$ for all $i$
implies that $y_1 \not\in \im(\phi)$. Hence $G$ cannot be written as a non-trivial fibre product over $\QQ$.}
\end{example}

We record our observations from this section in the next remark:

\begin{remark}\label{GrFP}{\rm Let $(Q,\m_Q,\sk)$ be a Gorenstein Artin local ring, and $G = \gr(Q)$.\\
a) If $G$ is indecomposable as a fibre product over $\sk$. Then exactly one of the two occurs:\\
{\rm (i)} $Q \simeq R \#_\sk S$, where $R$ and $S$ are Gorenstein Artin local rings with $\ell\ell(R) = \ell\ell(S) \geq 2$, or\\
{\rm (ii)} $Q$ is indecomposable as a connected sum over $\sk$.\\
b) If $G$ is Gorenstein, then either (i) $Q$ is standard graded, (i.e., $Q \simeq G$), or (ii) $Q$ is indecomposable as a connected sum over $\sk$. This follows from (a), since, if $G$ is Gorenstein, then it is indecomposable as a fibre product over $\sk$. \\
c) The two examples $Q = \QQ[y,z]/\langle yz, y^2 - z^2 \rangle \simeq \QQ[y]/\langle y^3 \rangle \#_{\QQ} \QQ[z]/\langle z^3 \rangle$, and $Q = \QQ[x]/\langle x^3 \rangle$ respectively show that both, (i) and (ii), in (a), and (b) are possible scenarios.
}\end{remark}

\section{Associated Graded Rings as Fibre Products}\label{GrFP}

Given a Gorenstein Artin local ring $Q$, with $\gr(Q) = G$, we identify conditions on the $G$ that are necessary for $Q$ to decompose as a connected sum. One such condition arises out of Proposition \ref{Prop2}, namely that, with some assumptions, $G$ is a fibre product. This leads to the following:

\begin{question}
Let $Q$ be a Gorenstein Artin local ring such that $\gr(Q)$ is decomposable as a fibre product over $\sk$. Is $Q$ decomposable as a connected sum over $\sk$?
\end{question}

Example \ref{FPex1} shows that the above question need not have a positive answer in general.

\begin{example}\label{FPex1}{\rm
Let $Q=\sk[X,Y,Z]/I$ where $I=\langle X^{3}-YZ,Y^3-XZ,Z^2\rangle$. Then $Q$ is a complete intersection with $\edim(Q) = 3$, and hence $Q$ is indecomposable as a connected sum over $\sk$ by Remark~\ref{indecomp.rmk}(a). However, $\gr(Q)\simeq \sk[X,Y]/\langle X^{4}-Y^4, X^2Y^3,X^3Y^2\rangle\times_{\sk}\sk[Z]/\langle Z^2\rangle.$
}\end{example}

In light of the above example, we impose a further restriction on $\gr(Q)$, and assume the following setup.

\begin{setup}\label{setup}
Let $(Q,\m_Q,\sk)$ be a Gorenstein Artin local ring with $G = \gr(Q)$, and $\ell\ell(Q) = s$. Assume $G \simeq A\times_{\sk}B$, where $A$ is graded Gorenstein, and $B$ is graded. Let $\ell\ell(A) = s$, and $\ell\ell(B) = k$ with $s > k + 1$.
\end{setup}

The following remark has some basic observations, used (often without reference) throughout this section. \begin{remark}\label{FPrmk}{\rm With notations as in the above setup, let $\m_A$, $\m_B$ and $\m_G$ be the respective maximal ideals of $A$, $B$ and $G$ respectively.\\
a) Let $K_1$ and $K_2$ be ideals in $Q$. Since $Q$ is Gorenstein, $(0:_QK_1)$ can be identified with a canonical module of $Q/K_1$, and hence $\lambda(Q/K_1) = \lambda (0:_QK_1)$.\\ Moreover, $(0:_Q (0:_Q K_1)) = K_1$, and $(0:_Q (K_1\cap K_2)) = (0:_QK_1) + (0:_Q K_2)$. The equality $(0:_Q(K_1+K_2) = (0:_QK_1) \cap (0:_Q K_2)$ holds without the Gorenstein assumption. \\
b) The above properties also hold for ideals in $A$. Furthermore, since $A$ is graded Gorenstein, and $\ell\ell(A) = s$, we have $0:_A \m_A^{i+1} = \m_A^{s-i}$ for $0 \leq i \leq s-1$.\\
c) By Remark \ref{FP}, we have a natural projection $\pi\colon G \rightarrow A$ with $\ker(\pi) = \m_B$.
Moreover, $\ker(\pi) \cap (\m_G)^{k+1} =0$ since $\m_B^{k+1} = 0$. Thus $\m_G^i = \m_A^i$, for $i = k+1,\ldots,s$. Finally, $(0:_G\m_A^i) = (0:_A\m_A^i) + \m_B = \m_A^{s-i+1} + \m_B$ for each $i$. 
}\end{remark}

We begin with some observations that hold under the above setup.

\begin{lemma}\label{stabl}
With notation as in Setup \ref{setup}, we have the following:
\begin{enumerate}[{\rm(a)}]
\item
$(0:_Q\m_Q^{s-1}) =  \m_Q^{2}+(0:_Q \m_Q^{k+1})$
\smallskip
\item $\lambda\left(\dfrac{(0:_Q\m_Q^{s-1})}{\m_Q^{2}}\right) = \edim(B).$
\end{enumerate}
\end{lemma}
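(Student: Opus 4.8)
The plan is to work inside the ring $Q$ and use its Gorenstein duality (Remark~\ref{FPrmk}(a)) to transfer the statement about $(0:_Q \m_Q^{s-1})$ into a statement about the associated graded ring $G$, where the fibre product decomposition $G \simeq A \times_\sk B$ is available. The key translations I would set up first are: since $Q$ is Gorenstein Artin with $\ell\ell(Q)=s$, the initial ideal of $(0:_Q \m_Q^{s-1})$ in $G$ is (essentially) $(0:_G \m_G^{s-1})$ up to the Iarrobino correction, and the crucial point is that in the stable range $i \geq k+1$ one has $\m_G^i = \m_A^i$ by Remark~\ref{FPrmk}(c), so $(0:_G \m_G^{s-1}) = \m_A^{2} + \m_B$ by the last sentence of Remark~\ref{FPrmk}(c). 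The idea is that $\m_A^2 + \m_B$ lifts to $\m_Q^2 + (0:_Q \m_Q^{k+1})$, because $\m_B$ is exactly the part of the socle-related ideal that is killed by $\m_Q^{k+1}$ but survives modulo $\m_Q^2$.

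\textbf{Part (a).} For the inclusion $\supseteq$, I would first check $\m_Q^2 \subseteq (0:_Q \m_Q^{s-1})$: this holds since $\m_Q^2 \cdot \m_Q^{s-1} = \m_Q^{s+1} = 0$ as $\ell\ell(Q) = s$. Next, $(0:_Q \m_Q^{k+1}) \subseteq (0:_Q \m_Q^{s-1})$ because $s - 1 \geq k+1$ (from $s > k+1$), so $\m_Q^{s-1} \subseteq \m_Q^{k+1}$ and the colon ideals are nested in reverse. For the reverse inclusion $\subseteq$, the plan is to pass to $G$: it suffices to show that the initial ideal $\langle x^* : x \in (0:_Q \m_Q^{s-1})\rangle$ is contained in the initial ideal of $\m_Q^2 + (0:_Q \m_Q^{k+1})$, and then argue that the two ideals have the same length (so equality of initial ideals, together with the inclusion already proved, forces equality in $Q$). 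By Gorenstein duality in $Q$, $\lambda\big((0:_Q \m_Q^{s-1})\big) = \lambda(Q) - \lambda(Q/(0:_Q \m_Q^{s-1})) = \lambda(Q) - \lambda(\m_Q^{s-1})$, and $\lambda(\m_Q^{s-1}) = H_Q(s-1) + H_Q(s) = H_G(s-1) + H_G(s)$ which by the stable range equals $H_A(s-1) + H_A(s)$. On the other side, $\m_Q^2 + (0:_Q \m_Q^{k+1})$ has initial ideal containing $\m_G^2$ and, since $\m_B \subseteq (0:_G \m_G^{k+1})$ lifts into $(0:_Q \m_Q^{k+1})$, its image in $G/\m_G^2 \cong \sk \oplus \m_G/\m_G^2$ picks up exactly the classes of $\m_B$; a length count in $G$ using $G \simeq A \times_\sk B$ and Remark~\ref{FPrmk}(b),(c) gives the same number $\lambda(G) - H_A(s-1) - H_A(s)$. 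Matching the two counts gives (a).

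\textbf{Part (b).} Once (a) is established, compute
\[
\lambda\!\left(\frac{(0:_Q \m_Q^{s-1})}{\m_Q^2}\right) = \lambda\!\left(\frac{\m_Q^2 + (0:_Q \m_Q^{k+1})}{\m_Q^2}\right) = \lambda\!\left(\frac{(0:_Q \m_Q^{k+1})}{\m_Q^2 \cap (0:_Q \m_Q^{k+1})}\right).
\]
Since $s > k+1$ forces $k+1 \leq s-1$, we have $\m_Q^2 \cdot \m_Q^{k+1} \subseteq \m_Q^{k+3}$, and one checks $\m_Q^2 \cap (0:_Q \m_Q^{k+1})$ is the appropriate submodule; passing to $G$ via initial forms, the quotient becomes $(0:_G \m_G^{k+1})/\big(\m_G^2 + (\text{graded part in degrees} \geq 2)\big)$, which by Remark~\ref{FPrmk}(c) is $(\m_A^{s-k} + \m_B)/(\m_A^2 + \cdots)$. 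Because $s - k \geq 2$, the $\m_A$-contribution sits inside $\m_G^2$ and contributes nothing to the quotient by $\m_G^2$; what remains is the degree-one part coming from $\m_B$, namely $\m_B/\m_B^2$, which has dimension $\edim(B)$. Carefully bookkeeping the grading — using that $\m_B$ is generated in degree $1$ in $G$ and $\m_B \cap \m_A = 0$ — yields $\lambda = \edim(B)$.

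\textbf{Main obstacle.} I expect the delicate step to be the passage from $Q$ to $G$ via initial ideals in part (a): a priori $\langle x^* : x \in (0:_Q \m_Q^{s-1})\rangle$ need not equal $(0:_G \m_G^{s-1})$, and one must control this discrepancy (this is exactly where Iarrobino's construction, Remark~\ref{Iarrobino}, and the vanishing $C_i = 0$ for $i \geq s-1$ enter). The cleanest route is probably to avoid claiming equality of initial ideals outright and instead run a pure length argument: prove the inclusion $\supseteq$ directly in $Q$ as above, then show $\lambda\big((0:_Q\m_Q^{s-1})\big) = \lambda\big(\m_Q^2 + (0:_Q\m_Q^{k+1})\big)$ by computing both sides through Hilbert functions and the fibre-product decomposition of $G$, so that the inclusion is forced to be an equality. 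Getting the second length exactly right — in particular verifying that the sum $\m_Q^2 + (0:_Q\m_Q^{k+1})$ is not "larger than expected," i.e.\ that $\m_Q^2 \cap (0:_Q\m_Q^{k+1})$ has the predicted length — is the part that needs the structural input from Setup~\ref{setup} most heavily.
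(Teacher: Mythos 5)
Your overall strategy (exploit Gorenstein duality together with the fibre product decomposition $G \simeq A\times_\sk B$) is in the right spirit, and your proof of the easy containment $\m_Q^2 + (0:_Q\m_Q^{k+1}) \subseteq (0:_Q\m_Q^{s-1})$ is correct. However, the gap you yourself flag in part (a) is a real one, and neither of your proposed fills closes it. Invoking Iarrobino's construction is off-target: Remark~\ref{Iarrobino} plays no role in this lemma (it only enters later, in Proposition~\ref{maincor}). Your alternative length-matching route rests on the unproved assertion that ``$\m_B \subseteq (0:_G\m_G^{k+1})$ lifts into $(0:_Q\m_Q^{k+1})$,'' i.e.\ that $(0:_Q\m_Q^{k+1})^*$ contains $B_1$. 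That is not a priori true --- it is essentially equivalent to what you are trying to prove --- and establishing it requires precisely the kind of initial-form analysis you are trying to avoid, now applied to $(0:_Q\m_Q^{k+1})$ in place of $(0:_Q\m_Q^{s-1})$. Likewise, the claim that the second length ``gives the same number $\lambda(G)-H_A(s-1)-H_A(s)$'' is asserted, not demonstrated, and in particular the length of $\m_Q^2 \cap (0:_Q\m_Q^{k+1})$ is never actually controlled.

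The paper sidesteps all of this by dualizing first: since $Q$ is Gorenstein, (a) is equivalent (apply $0:_Q(-)$ and use Remark~\ref{FPrmk}(a)) to $\m_Q^{s-1} = (0:_Q\m_Q^{2})\cap\m_Q^{k+1}$. This replaces the problematic ideal $(0:_Q\m_Q^{s-1})$ --- whose initial ideal you correctly note is hard to identify --- by $\m_Q^{s-1}$, whose initial ideal is exactly $\m_G^{s-1}$ by Remark~\ref{GR}(a). The nontrivial containment is then a direct element argument: if $0 \neq z \in (0:_Q\m_Q^{2})\cap\m_Q^{k+1}$, then $z^* \in (0:_G\m_G^2)\cap\m_G^{k+1}$, and the identities $\m_G^{k+1}=\m_A^{k+1}$, $(0:_G\m_A^2) = (0:_A\m_A^2)+\m_B$ (Remark~\ref{FPrmk}(c)), and $\m_B^{k+1}=0$ force $z^* \in (0:_A\m_A^2)=\m_A^{s-1}=\m_G^{s-1}$, whence $z\in\m_Q^{s-1}$. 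No length bookkeeping is needed in (a). For (b), the paper's computation is a short length chase using duality and $\m_Q^{s-1}=\m_A^{s-1}$ that does not depend on (a) at all; your reduction of (b) to (a), while in principle viable, inherits the same unresolved initial-form issues and is more complicated than necessary.
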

\begin{proof}\hfill{}\\
a) By Remark \ref{FPrmk}(a), it is enough to prove that $\m_Q^{s-1} =  (0:_Q\m_Q^{2})\cap\m_Q^{k+1}$.
Since $s -1 \geq k+1$, we have $\m_Q^{s-1} \subset  (0:_Q\m_Q^{2})\cap \m_Q^{k+1}$. 

In order to prove the other containment, let $0 \neq z \in \m_Q^{k+1}$ be such that $z \m_Q^{2} = 0$. Then
$z^*\in (0:_G \m_G^{2}) \cap \m_G^{k+1} = (0:_G (\m_A^{2} + \m_B^{2})) \cap \m_G^{k+1}  = (0:_G \m_A^{2}) \cap (0:_G \m_B^{2}) \cap \m_G^{k+1}$.
Since $\m_G^{k+1} \subset (0:_G \m_B^{2})$, we get $z^* \in (0:_G \m_A^{2}) \cap \m_G^{k+1} = ((0:_A \m_A^{2}) + \m_B) \cap \m_A^{k+1}$.
Thus, there exist homogeneous elements $\alpha \in (0:_A \m_A^{2})$, and $\beta \in \m_B$, such that $z^* = \alpha + \beta$. Since $\m_B^{k+1} = 0$, and $\deg(z^*) \geq k+1$, we see that $\beta = 0$.

This shows that $z^* \in (0:_A \m_A^{2}) = \m_A^{s-1} = \m_G^{s-1}$, by Remark \ref{FPrmk}. Thus, $z \in \m_Q^{s-1}$, which proves (a).

\noindent
b) Remark \ref{FPrmk} implies that $$\lambda\left(\dfrac{(0:_Q\m_Q^{s-1})}{\m_Q^{2}}\right) =\lambda(Q/\m_Q^{s-1})-\lambda(\m_Q^{2})
=\lambda(Q/\m_Q^{2})-\lambda(\m_Q^{s-1})
=\lambda(Q/\m_Q^{2})-\lambda(\m_A^{s-1}),$$
where the last equality follows since $s -1  \geq k+1$.
Now, $\lambda(\m_A^{s-1}) = \lambda(0:_A \m_A^{2}) = \lambda(A/\m_A^{2})$, and hence $$\lambda\left((0:_Q\m_Q^{s-1})/\m_Q^{2}\right) = \lambda\left(Q/\m_Q^{2}\right) - \lambda\left(A/\m_A^{2}\right) = \lambda\left(B/\m_B^{2}\right) - 1,$$ proving (b), since $\edim(B)+1=\lambda\left(B/\m_B^{2}\right)$.
\end{proof}

\begin{theorem}\label{EXJ}
With notation as in Setup \ref{setup}, let $J = \langle z_1 \ldots, z_n \rangle \subset (0:_Q \m_Q^{k+1})$ be such that the images of $z_1, \ldots, z_n$ form a $\sk$-basis of $((0:_Q\m_Q^{k+1})+\m_Q^2)/\m_Q^2$, and $I = \langle y_1,\ldots, y_m\rangle$ be such that $\m_A$ is minimally generated by $\{ y_1^*,\ldots, y_m^*\}$. Then the following hold:
\begin{enumerate}[{\rm(a)}]
\item $\edim(B) = \mu(J) = n$, $J + \m_Q^2 = 0:_Q \m_Q^{s-1}$, and if $k =1$, then $0:_Q \m_Q^{s-1} \subset J + (0:_Q J)$. 
\item $\m_B$ and $\m_Q$ are minimally generated by $\{z_1^*,\ldots,z_n^*\}$ and $\{y_1,\ldots,y_m,z_1,\ldots,z_n\}$ respectively. In particular, $\m_Q = I + J$.
\item $J\m_Q^{k}=\soc(Q)$, and $J^k \neq 0$.
\item $IJ \subset \m_Q^3$, $I^kJ + J^{k+1} \subset \soc(Q)$, and $I^aJ^b=0$ for $a+b=k+1$, and $2 \leq b \leq k$.
\item $\m_Q^i=I^i$ for $i\geq k+1$.
\end{enumerate}
\end{theorem}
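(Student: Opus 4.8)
The plan is to handle the five parts in the order (a), (b), then the ideal-theoretic bookkeeping behind (d) and (e), and finally (c), since these identities feed into one another. Part (a) is a repackaging of Lemma~\ref{stabl}: by Lemma~\ref{stabl}(a) we have $J+\m_Q^2=(0:_Q\m_Q^{k+1})+\m_Q^2=0:_Q\m_Q^{s-1}$, so the hypothesis on the $z_i$ says precisely that their images form a $\sk$-basis of $(0:_Q\m_Q^{s-1})/\m_Q^2$, whence $n=\edim(B)$ by Lemma~\ref{stabl}(b). Linear independence of the $z_i$ modulo $\m_Q^2$ yields $\mu(J)=n$ through the composite $J/\m_Q J\twoheadrightarrow J/(J\cap\m_Q^2)\hookrightarrow\m_Q/\m_Q^2$, and when $k=1$ the hypothesis $J\m_Q^2=0$ gives $\m_Q^2\subseteq 0:_Q J$, so $0:_Q\m_Q^{s-1}=J+\m_Q^2\subseteq J+(0:_Q J)$.

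I expect part (b) to be the main obstacle, as it is where the fibre-product structure of $G$ must be reconciled with initial forms. Since $\gr(Q)$ is generated in degree one, $A$ and $B$ are standard graded $\sk$-algebras. The relation $z_i\,\m_Q^{k+1}=0$ forces $z_i^*\,\m_G^{k+1}=0$, because $\m_G^{k+1}$ is spanned by initial forms of elements of $\m_Q^{k+1}$ and $z_i$ annihilates $\m_Q^{k+1}$; thus $z_i^*\in 0:_G\m_G^{k+1}$. By Remark~\ref{FPrmk}(c) this colon ideal is $\m_A^{s-k}+\m_B$, whose degree-one component is exactly $B_1$ because $s-k\ge 2$; hence $z_i^*\in B_1$. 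These $n=\dim_\sk B_1$ linearly independent elements therefore form a basis of $B_1$ and minimally generate $\m_B$, and adjoining the given basis $\{y_1^*,\dots,y_m^*\}$ of $A_1$ produces a basis of $G_1=\m_Q/\m_Q^2$. Thus $\{y_1,\dots,y_m,z_1,\dots,z_n\}$ minimally generates $\m_Q$, and $\m_Q=I+J$.

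Granting $\m_Q=I+J$, parts (d) and (e) become bookkeeping with the ideals $I^aJ^b$. First, $y_i^*z_j^*\in\m_A\m_B=0$ in $G$, so $y_iz_j\in\m_Q^3$, that is $IJ\subseteq\m_Q^3$. Next, $J\m_Q^{k+1}=0$ together with $\m_Q^{k+1}=\sum_{a+b=k+1}I^aJ^b$ forces $I^aJ^c=0$ whenever $a+c=k+2$ and $c\ge1$, and hence $I^aJ^c=0$ whenever $c\ge1$ and $a+c\ge k+2$ (such a product contains a vanishing subproduct). For $a+b=k+1$ with $2\le b\le k$ (so $a\ge1$), trading a factor $IJ$ for $\m_Q^3$ gives $I^aJ^b\subseteq I^{a-1}J^{b-1}\m_Q^3=\sum_{p+q=3}I^{a-1+p}J^{b-1+q}$, a sum of ideals of exponent sum $k+2$ and $J$-exponent $\ge b-1\ge1$, all zero; and $I^kJ,J^{k+1}\subseteq\m_Q^kJ$ with $(\m_Q^kJ)\m_Q=J\m_Q^{k+1}=0$ gives $I^kJ+J^{k+1}\subseteq\soc(Q)$. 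This settles (d). For (e), $\m_Q^{k+1}=I^{k+1}+I^kJ+J^{k+1}=I^{k+1}+\soc(Q)$, which equals $I^{k+1}$ once (c) provides $\soc(Q)\subseteq I^{k+1}$, and then induction via $\m_Q^{i+1}=(I+J)I^i=I^{i+1}+I^iJ=I^{i+1}$ (as $I^iJ$ has exponent sum $i+1\ge k+2$) gives $\m_Q^i=I^i$ for all $i\ge k+1$.

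Finally (c): from Remark~\ref{FP} and $\ell\ell(B)=k<s$ we get $H_Q(s)=H_G(s)=H_A(s)=1$, so $\m_Q^s$ is one-dimensional and, lying inside $\soc(Q)$, equals it; expanding $\m_Q^s=\sum_{a+b=s}I^aJ^b$ and invoking the vanishing above (since $a+b=s\ge k+2$) yields $\soc(Q)=\m_Q^s=I^s\subseteq I^{k+1}$. Since $\ell\ell(B)=k$, $\m_B^k=B_k\ne0$, so some product $z_{i_1}^*\cdots z_{i_k}^*$ is nonzero in $G$; lifting, $w:=z_{i_1}\cdots z_{i_k}\in\m_Q^k\setminus\m_Q^{k+1}$, so $J^k\ne0$, and $\m_Q w\ne0$ (otherwise $w\in\soc(Q)=\m_Q^s\subseteq\m_Q^{k+1}$). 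Because $\m_Q w\subseteq\m_Q J^k=IJ^k+J^{k+1}\subseteq J\m_Q^k$ (using $IJ^k=0$ when $k\ge2$, and $\m_Q J=IJ+J^2=J\m_Q$ when $k=1$), while $(J\m_Q^k)\m_Q=0$ forces $J\m_Q^k\subseteq\soc(Q)$, one-dimensionality gives $J\m_Q^k=\soc(Q)$. The only genuinely delicate point in the whole argument is the identification $z_i^*\in B_1$ in part (b); everything else is formal manipulation with powers of $I$ and $J$.
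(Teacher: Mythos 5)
Your proof is correct, and at the macro level it runs parallel to the paper's: both rely on Lemma~\ref{stabl} to interpret $J$, on the computation of $(0:_G \m_G^{k+1})$ from Remark~\ref{FPrmk}(c) to land the $z_i^*$ in $B_1$, and on the vanishing of the mixed powers $I^aJ^b$ together with the one-dimensionality of the socle. Your handling of the delicate step in (b), namely that $z_i\m_Q^{k+1}=0$ forces $z_i^*\m_G^{k+1}=0$, is exactly what is needed and matches the paper's implicit use of the same fact.

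Where you genuinely diverge is in three local arguments, and in each case your version is a clean alternative to the paper's. For the $k=1$ clause in (a), you observe directly that $J\m_Q^2=0$ gives $\m_Q^2\subseteq(0:_QJ)$, so $J+\m_Q^2\subseteq J+(0:_QJ)$; the paper instead shows $(0:_QJ)\cap J\subseteq\m_Q^{s-1}$ and passes to annihilators using the Gorenstein duality $0:_Q(K_1\cap K_2)=(0:_QK_1)+(0:_QK_2)$. For (c), you exhibit an explicit witness $w=z_{i_1}\cdots z_{i_k}$ with nonzero initial form, argue $\m_Qw\neq0$ and $\m_Qw\subseteq J\m_Q^k$, and conclude by one-dimensionality; the paper instead runs a colength comparison through Lemma~\ref{stabl} and Remark~\ref{FPrmk}(a) to show $(0:_Q\m_Q^k)+\m_Q^2\subsetneq J+\m_Q^2$, which forces $J\m_Q^k\neq0$. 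For the step $\soc(Q)\subseteq I^{k+1}$ needed in (e), you identify $\soc(Q)=\m_Q^s=I^s$ from $H_Q(s)=1$ and the vanishing of $I^aJ^b$ with $b\geq1$, $a+b=s$; the paper instead shows $I^{k+1}\neq0$ (else $\m_Q^{k+2}=0$, contradicting $s>k+1$) and invokes that every nonzero ideal of a Gorenstein Artin ring contains the socle. All three variants are sound; yours are arguably more constructive, and the derivation $\soc(Q)=I^s$ is a nice strengthening in passing. One presentational caveat: in (e) you write ``once (c) provides $\soc(Q)\subseteq I^{k+1}$,'' but part (c) as stated does not assert this; the inclusion is a byproduct of your proof of (c) that depends only on the vanishing lemma and $H_Q(s)=1$, so it should be recorded as a standalone observation (or proved before (e)) to keep the logical order linear.
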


\begin{proof}\hfill{}\\
a) By the choice of the $z_i$'s, the images of $z_1,\ldots,z_n$ are linearly independent in $\m_Q/\m_Q^2$, and hence in $J/\m_Q J$. Thus $\mu(J) = n$. Moreover, by Lemma \ref{stabl}(a), we get $J + \m_Q^2 = (0:_Q\m_Q^{s-1})$, and hence Lemma \ref{stabl}(b) gives $\mu(J)=\edim(B)$. 

Observe that if $k = 1$, then $0: (J + \m_Q^{k+1}) = 0: (J + \m_Q^{2}) = 0:_Q(0:_Q \m_Q^{s-1}) = \m_Q^{s-1}$. Thus, we have $(0:_Q J) \cap J \subset (0:_Q J) \cap (0:_Q \m_Q^{k+1}) \subset 0:_Q(J + \m_Q^{k+1}) =  \m_Q^{s-1}$. Since $Q$ is Gorenstein, this shows that $0:_Q \m_Q^{s-1} \subset J + (0:_Q J)$, proving (a).

\noindent
b) Note that if we prove $\m_B$ is minimally generated by $\{z_1^*,\ldots,z_n^*\}$, then $\m_G$ is minimally generated by $\{y_1^*,\ldots,y_m^*, z_1^*,\ldots,z_n^*\}$, and hence $\m_Q=\langle \y,\z\rangle$. Thus, in order to prove (b), since $\edim(B) = n$, it is enough to show that $\{z_1^*+\m_B^2,\ldots,z_n^*+\m_B^2\}$ is a linearly independent set in $\m_B/\m_B^2$. Firstly, note that $z_j^*\in (0:_G \m_G^{k+1})= \m_A^{s-k} + \m_B$ by Remark \ref{FPrmk}(c). Since $\deg(z_j^*)=1$ and $s - k > 1$, we get $z_j^*\in \m_B$. 

Now, let $\alpha = \sum_{i=1}^n \alpha_i z_i^*$, where $\alpha_i \in \sk$. Then $\alpha = 0$ or $\deg(\alpha) = 1$. Thus $\alpha \in \m_B^2$ implies $\alpha = 0$ in $\m_B$, and hence $\m_G$. If $a_i \in Q$ is a lift of $\alpha_i$, this implies that $\sum_{i=1}^n a_i z_i \in \m_Q^2$. In particular, $a_i \in \m_Q$, by the independence of the images of the $z_i$'s modulo $\m_Q^2$. This forces $\alpha_i = 0$ for each $i$, and hence $\{z_1^*+\m_B^2,\ldots,z_n^*+\m_B^2\}$ is a linearly independent set in $\m_B/\m_B^2$. This proves (b). 

\noindent
c) We first show that $J^k$ is not contained in $\m_Q^{k+1}$. If not, for non-negative integers $i_1, \ldots, i_n$ such that $i_1 + \cdots + i_n = k$, we have $(z_1^*)^{i_1}\cdots(z_n^*)^{i_n} = 0$, i.e, $\m_B^k = 0$, which is a contradiction to $\ell\ell(B) = k$. In particular, since $J^k\m_Q^2=0$, it follows from Lemma \ref{stabl} that $J^k$ is not contained in $\m_Q^{s-1}=\m_Q^{k+1}\cap(0:_Q \m_Q^2)$. This shows that $J^k\neq 0$. 

Furthermore, since $J^k\subset \m_Q^k\cap (0:_Q \m_Q^2)$, we see that $\m_Q^{s-1}\subsetneq  \m_Q^k\cap (0:_Q \m_Q^2)$.
Hence, by Remark \ref{FPrmk}(a), we have $(0:_Q \m_Q^k)+ \m_Q^2 \subsetneq (0:_Q \m_Q^{s-1})=J+\m_Q^2$. In particular, $J\m_Q^{k} \neq 0$, and hence, $\soc(Q) \subset J\m_Q^k$. Since $J \m_Q^{k+1} = 0$, we get the other containment, proving (c).

\noindent
d) Now, $G \simeq A \times_\sk B$ implies $y_i^*z_j^*=0$ in $G$, which forces $y_iz_j\in \m_Q^3$ for all $i$ and $j$. Hence $IJ \subset \m_Q^3$.
Moreover, $J\m_Q^k = \soc(Q)$ implies that $I^kJ + J^{k+1} \subset \soc(Q)$. 
Finally, since $a + b = k+1$, and $2 \leq b \leq k$, we have $I^aJ^b = J(IJ)(I^{a-1}J^{b-2}) \subset J\m_Q^3\m_Q^{k-2} = J \m_Q^{k+1} = 0$, proving (d).

\noindent
e) It is enough to show that $\m_Q^{k+1}=I^{k+1}$ as $\m_Q = I + J$, and $J\m_Q^{k+1} = 0$. Now, by (d), we have $\m_Q^{k+1} = I^{k + 1} + \soc(Q)$. We now claim that $I^{k+1} \neq 0$. This is true since if $I^{k+1}=0$, then $\m_Q^{k+2}=0$, which contradicts the fact that $s > k+1$. Since $Q$ is Gorenstein Artin, and $I^{k+1} \neq 0$, we have $\soc(Q)\subset I^{k+1}$, and hence $\m_Q^{k+1} = I^{k+1}$.
\end{proof}

\section{Associated Graded Rings and Decomposability}\label{Gr}

As seen in Example \ref{FPex1}, given a Gorenstein Artin local ring $Q$, if $\gr(Q) \simeq A \times_\sk B$ for graded rings $A$ and $B$, then $Q$ need not be decomposable as a connected sum over $\sk$. However, if $A$ is a graded Gorenstein quotient of $\gr(Q)$ with $\ell\ell(A) > \ell\ell(B)+1$, and one of the following holds: (i) $\ell\ell(B) = 1$ or (ii) $\edim(A) = 1$ and $\ell\ell(B)=2$, then the converse of Proposition~\ref{Prop2} holds, as can be seen in Theorem \ref{StructureTheorem} and Corollary \ref{Loewy(B)=2}.

Before we head towards the proofs of these theorems, we show that such results do not hold, without further hypothesis, for $\ell\ell(B) \geq 3$.

\begin{example}{\rm
Let $Q=\sk[X,Y]/\langle X^3+Y^3+XY,Y^4\rangle$. Then we get $\gr(Q)\simeq A \times_{\sk} B$, where $A =\sk[X]/\langle X^9\rangle$ and $B = \sk[Y]/\langle Y^4\rangle$. Note that $A$ is graded Gorenstein, $\edim(A) = 1$, $\ell\ell(A) = 8 > \ell\ell(B) + 1$ since $\ell\ell(B) = 3$.

In this example, $H_Q(2)=3\geq \binom{2}{2}+2$, so by Remark~\ref{indecomp.rmk}(b), $Q$ is indecomposable as a connected sum over $\sk$.}
\end{example}

\subsection{The \boldmath$\ell\ell(B) = 1$ case}

We begin with the following proposition. Note that the statement is true more generally, in particular, for any standard graded $\sk$-algebra $G$.

\begin{proposition}\label{GLS}
Let $(Q,\m_Q,\sk)$ be a Gorenstein Artin local ring with $\ell\ell(Q) \geq 2$, and $G = \gr(Q)$. Then the following are equivalent:
\begin{enumerate}[\rm i)]
\item $G$ is as in Setup \ref{setup}, with $\ell\ell(B) = 1$. 
\item There exists a graded Gorenstein ring $A$ and a surjective ring homomorphism $\pi\colon  G \rightarrow A$
such that $\ker(\pi) \cap (\m_G)^2 =0$.
\item $\dim_{\sk} (\soc(G)\cap (\m_G)^2)=1$.
\end{enumerate}
In particular, if {\rm (i)} holds, then we have $A \simeq G/\langle \soc(G)\cap G_{1}\rangle$, $\m_B = \langle \soc(G) \cap G_1\rangle$, \\
$\lambda(G) - \lambda(A) = \edim(G) - \edim(A) = \type(G) - 1 = \edim(B)$, and $\ell\ell(A) = \ell\ell(G)$.
\end{proposition}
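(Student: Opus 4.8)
The plan is to prove the cyclic implications (i) $\Rightarrow$ (ii) $\Rightarrow$ (iii) $\Rightarrow$ (i), and then extract the ``In particular'' statements along the way (most of them fall out of the proof of (iii) $\Rightarrow$ (i), or from Lemma \ref{stabl} and Theorem \ref{EXJ} specialized to $k = 1$). Throughout, write $G = \gr(Q)$, which is a standard graded $\sk$-algebra, so $\m_G = G_{\geq 1}$ and $\m_G^2 = G_{\geq 2}$; note that since $\ell\ell(Q) \geq 2$ we have $\m_G^2 \neq 0$ by Remark \ref{GR}(a).

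For (i) $\Rightarrow$ (ii): if $G \simeq A \times_\sk B$ with $A$ graded Gorenstein, $\ell\ell(B) = 1$, and $\ell\ell(A) = s > \ell\ell(B) + 1 = 2$, take $\pi\colon G \to A$ to be the natural projection of Remark \ref{FPrmk}(c), whose kernel is $\m_B$. Since $\ell\ell(B) = 1$ we have $\m_B^2 = 0$, and since $B$ lives in the fibre product, $\m_B \cap \m_A^{\,\cdot} = 0$; concretely $\ker(\pi) \cap \m_G^2 = \m_B \cap (\m_A^2 + \m_B^2) = \m_B \cap \m_A^2 = 0$ (using $\m_G^2 = \m_A^2 \oplus \m_B^2$ in the fibre product and $\m_B^2 = 0$). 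For (ii) $\Rightarrow$ (iii): given such a $\pi$, the ideal $\ker(\pi)$ is a nonzero graded ideal (nonzero because $\edim(A) < \edim(G)$; indeed $A$ cannot be all of $G$ since $G$ is not Gorenstein — or argue directly that if $\ker\pi = 0$ then $G \simeq A$ is Gorenstein, handled separately) with $\ker(\pi) \cap \m_G^2 = 0$, hence $\ker(\pi)$ is concentrated in degree $1$, i.e. $\ker(\pi) = \sk\text{-span of a subspace } W \subseteq G_1$ with $W \cdot \m_G = 0$, so $W \subseteq \soc(G)$; conversely $\soc(G) \cap \m_G^2$ maps injectively into $A$ and lands in $\soc(A) \cap \m_A^2$, which has dimension $1$ since $A$ is Gorenstein with $\ell\ell(A) \geq 2$ (Notation b); chasing these inclusions forces $\dim_\sk(\soc(G) \cap \m_G^2) = 1$. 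The cleanest route here is: $\soc(G) = (\soc(G) \cap G_1) \oplus (\soc(G) \cap \m_G^2)$ as graded vector spaces, $\ker\pi = \soc(G) \cap G_1$, and $\soc(A) \cong \soc(G)/\ker\pi \cong \soc(G)\cap \m_G^2$, which is $1$-dimensional.

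For the main implication (iii) $\Rightarrow$ (i): set $V := \soc(G) \cap G_1$, let $A := G/\langle V \rangle$ and $B := G/\langle G_{\geq 2} \rangle$ — wait, more precisely by Remark \ref{FP} we want $G \simeq A \times_\sk B$ coming from a decomposition $\m_G = \langle \y^* \rangle + \langle \z^* \rangle$ with $\y^* \cdot \z^* = 0$, where $\z^*$ is a basis of $V$ and $\y^*$ extends $\z^*$ to a basis of $G_1$. Since $V \subseteq \soc(G)$, every product $y_i^* z_j^* = 0$ and $z_i^* z_j^* = 0$, so by Remark \ref{FP} $G \simeq A \times_\sk B$ with $A = G/\langle \z^* \rangle$ and $B = G/\langle \y^* \rangle$; here $\m_B = V$, so $\ell\ell(B) = 1$ (it is nonzero as long as $V \neq 0$, which must be checked — this is where hypothesis (iii) giving $\dim = 1 \neq 0$ is used, and one must also rule out $V$ being ``too big'', but (iii) pins it to exactly one dimension only indirectly; what we really need is $V \neq 0$ and $A$ Gorenstein). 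Then $A = G/\langle V\rangle$: since $V \subseteq \soc(G) \cap G_1$ and $\soc(G)\cap \m_G^2$ is $1$-dimensional, $\soc(A) = \soc(G)/V$, which is $1$-dimensional, so $A$ is Gorenstein; and $A$ is graded since $V$ is a graded (degree-$1$) ideal. Finally $\ell\ell(A) = \ell\ell(G) = s$: quotienting by the degree-$1$ space $V$ cannot lower the top nonzero degree because $\m_G^s = \m_A^s$ (the top socle element of $G$ lies in $\m_G^s \subseteq \m_G^2$, hence is not killed, using $s \geq 2$ — in fact $s \geq 3$ once we know $\ell\ell(B) = 1$ and the setup demands $s > k + 1 = 2$; so one should observe that $\ell\ell(A) = s \geq 3 > 2 = \ell\ell(B) + 1$ automatically, which is needed to land in Setup \ref{setup}). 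The ``In particular'' identities then follow: $\edim(B) = \dim_\sk V = \edim(G) - \edim(A)$, $\lambda(G) - \lambda(A) = \lambda(B) - 1 = \edim(B)$ (as $\ell\ell(B) = 1$ forces $\lambda(B) = 1 + \edim(B)$), and $\type(G) = \dim_\sk \soc(G) = \dim_\sk V + 1 = \edim(B) + 1$, i.e. $\type(G) - 1 = \edim(B)$.

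The main obstacle is the bookkeeping in (iii) $\Rightarrow$ (i): one must show that $V := \soc(G) \cap G_1$ is exactly the right complement to split off, that $A = G/\langle V \rangle$ is Gorenstein with $\ell\ell(A) = s$ (so that the $s > k+1$ condition of Setup \ref{setup} is met), and — the subtle point — that the hypothesis $\dim_\sk(\soc(G)\cap \m_G^2) = 1$ is precisely what guarantees $A$ is Gorenstein rather than merely a graded quotient. It is also worth isolating at the outset the degenerate case where $G$ itself is Gorenstein (then $\soc(G) \cap \m_G^2 = \soc(G)$ has dimension $1$ iff $G$ is Gorenstein, and $V$ may be $0$, so $\ell\ell(B) = 1$ would force $B$ to be a field, contradicting nontriviality) — so one should note that (iii) together with $\ell\ell(Q)\ge 2$ and the standing assumption that we are looking for a \emph{nontrivial} decomposition forces $V \neq 0$, i.e. $G$ is not Gorenstein. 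Everything else is routine graded linear algebra together with Remark \ref{FP}, Lemma \ref{stabl}, and Remark \ref{GR}(a).
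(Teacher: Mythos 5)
Your proof follows the paper's argument essentially step for step: the cycle (i)$\Rightarrow$(ii)$\Rightarrow$(iii)$\Rightarrow$(i), with the projection $\pi\colon G\to A$ having kernel $\m_B$ (with $\m_B^2=0$) for (i)$\Rightarrow$(ii), a socle-degree argument for (ii)$\Rightarrow$(iii), the extension of a basis of $\soc(G)\cap G_1$ to a minimal generating set of $\m_G$ and an appeal to Remark \ref{FP} for (iii)$\Rightarrow$(i), and the same bookkeeping for the ``In particular'' identities. The only visible difference is cosmetic: for (ii)$\Rightarrow$(iii) you work with the graded splitting $\soc(G)=(\soc(G)\cap G_1)\oplus(\soc(G)\cap\m_G^2)$, while the paper tracks the isomorphisms $\pi|_{\m_G^i}\colon\m_G^i\to\m_A^i$ for $i\geq 2$ and sandwiches $\m_G^s\subset\soc(G)\cap\m_G^2\subset\m_G^s$; also note that your ``cleanest route'' asserts $\ker\pi=\soc(G)\cap G_1$ without argument (the inclusion $\ker\pi\subseteq\soc(G)\cap G_1$ is immediate from $\ker\pi\cap\m_G^2=0$, but equality needs the Gorenstein hypothesis on $A$ to rule out socle in two degrees), whereas your first, longer route avoids this. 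You correctly flag the degenerate cases that the statement and the paper's own proof are silent about ($G$ itself Gorenstein, in which case (ii) and (iii) hold but (i) does not, and $\ell\ell(Q)=2$, where Setup \ref{setup} cannot be met because it requires $s>k+1=2$); these are genuine lacunae in the paper, not in your write-up.
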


\begin{proof}
(i) $\Rightarrow$ (ii): By Remark \ref{FP}, $A \simeq G/\m_B$. Since $\m_B^2 = 0$, (ii) holds.
\vskip 3pt

\noindent
(ii) $\Rightarrow$ (iii): By (ii), we see that $\pi |_{(\m_G)^i}: (\m_G)^i \longrightarrow (\m_A)^i$ is an isomorphism for each
$i \geq 2$. In particular, if $\soc(A) = (\m_A)^s$, then $s = \ell\ell(A) = \ell\ell(G) \geq 2$ and $\dim_{\sk}((\m_G)^s) = 1$.

Suppose $z \in \soc(G) \cap (\m_G)^2$. Then $\pi(z) \in \soc(A) = (\m_A)^s$. Since $s \geq 2$, this forces $z \in (\m_G)^s$.
Thus $(\m_G)^s \subset \soc(G) \cap (\m_G)^2 \subset (\m_G)^s$, proving (iii).
\vskip 3pt

\noindent
(iii) $\Rightarrow$ (i): Since $G \simeq A \times_\sk B$, where $A \not\simeq \sk \not\simeq B$, we see that $G$ is not Gorenstein. Hence (iii) implies that there is a 
$\sk$-basis, say $\{\overline{z}_1,\ldots,\overline{z}_n\}$, for $(\soc(G) + (\m_G)^2)/(\m_G)^2$, where
$n = \type(G) - 1 \geq 1$. Extend this to a $\sk$-basis
$\{\overline{y}_1, \ldots, \overline{y}_m, \overline{z}_1,\ldots,\overline{z}_n\}$ of
$\m_G/(\m_G)^2$, and lift it to a minimal generating set $\{\y, \z\}$ of $\m_G$ in $G_1$.
Since $\langle \y\rangle \cap \langle \z\rangle = 0$ and $\langle \y\rangle + \langle \z\rangle = \m_G$, 
Remark \ref{FP} implies that  $G \simeq A \times_\sk B$,
where $A = G/\langle \z\rangle$ and $B = G/\langle \y\rangle$ are graded $\sk$-algebras.

Since $z_i \in \soc(G)$ for each $i$, their images in $B$, which are the generators of $\m_B$, are in $\soc(B)$, and in
particular, $\m_B = \soc(B)$. Thus $\m_B^2 = 0$, and $\type(B) = n$. Therefore, by Remark \ref{FP},  $\type(A) = 1$, i.e., $A$ is a graded Gorenstein $\sk$-algebra.
\vskip 3pt

The last part follows from the proof above, Remark \ref{FP} and the facts that $\type(A) = 1$ and
$\edim(B) = \type(B) = \lambda(B) - 1$.
\end{proof}

\begin{remark}{\rm
The proof of (iii) $\Rightarrow$ (i) in the above proposition follows from \cite[Lemma 1.6]{AAM}. We reprove it here for the sake of completeness. 
}\end{remark}

We now prove the following proposition, which is crucial in our proof of Theorem \ref{StructureTheorem}.

\begin{proposition} \label{basisprop} With notation as in Setup \ref{setup}, let $k = 1$, and $J = \langle z_1,\ldots, z_n \rangle$ be as in Theorem \ref{EXJ}. Then $J + (0:_Q J) = \m_Q$, and $\mu(0:_Q J) = \edim(Q) - n$.
\end{proposition}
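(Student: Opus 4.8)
The plan is to exploit the structural information already extracted in Theorem~\ref{EXJ} together with the special features of the $k=1$ case. First I would record what Theorem~\ref{EXJ}(a) gives us when $k=1$: namely $J + \m_Q^2 = 0:_Q \m_Q^{s-1}$ and, crucially, the inclusion $0:_Q \m_Q^{s-1} \subset J + (0:_Q J)$. Since trivially $\m_Q^2 \subset 0:_Q \m_Q^{s-1}$ (as $s \geq 2$, so $\m_Q^{s+1}=0$ gives $\m_Q^2 \m_Q^{s-1} = 0$), combining these yields $J + \m_Q^2 \subset J + (0:_Q J)$. The goal $J + (0:_Q J) = \m_Q$ would then follow if I can show $\m_Q \subset J + (0:_Q J)$; equivalently, since $\m_Q = I + J$ by Theorem~\ref{EXJ}(b), it suffices to show $I \subset J + (0:_Q J)$, i.e. each generator $y_i$ lies in $J + (0:_Q J)$.

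The key step is therefore to show $y_i \in J + (0:_Q J)$ for each $i$. Here I would use the $k=1$ specialization heavily. When $k=1$ we have $\ell\ell(B)=1$, so $\m_B = \soc(B)$ and $\m_B^2 = 0$; moreover $J\m_Q = J\m_Q^k = \soc(Q)$ by Theorem~\ref{EXJ}(c), and $IJ \subset \m_Q^3$ by Theorem~\ref{EXJ}(d). I would argue that $y_i z_j \in IJ \subset \m_Q^3 \subset \m_Q^{s-1}\cdots$ — wait, more carefully: I want to show $y_i z_j = 0$ in $Q$, not merely that it lies in a high power. The cleaner route is to pass to $G = \gr(Q) \simeq A \times_\sk B$: since $y_i^* \in \m_A$ and $z_j^* \in \m_B$ and the fibre product relation forces $\m_A \cdot \m_B = 0$ in $G$, we get $y_i^* z_j^* = 0$, hence $y_i z_j \in \m_Q^3$. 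Then iterating or using $z_j \in 0:_Q\m_Q^2$ (which holds since $J \subset 0:_Q \m_Q^{k+1} = 0:_Q\m_Q^2$ when $k=1$): thus $y_i z_j \in \m_Q^3 \cap \m_Q z_j$, and multiplying by anything in $\m_Q$ kills it because $z_j\m_Q^2 = 0$. Actually the sharpest statement: $y_iz_j \in \m_Q \cdot J$, and $\m_Q J \cdot \m_Q = \m_Q^2 J = 0$ since $J \subset 0:_Q\m_Q^2$; but that only says $y_iz_j \in \soc(Q)$. To conclude $y_i \in 0:_Q J$ I instead want $y_i z_j = 0$ outright. This is exactly where I expect the main obstacle to lie, so let me think about it differently.

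The resolution: rather than showing $y_i z_j = 0$, I would show directly that $0:_Q J$ together with $J$ spans $\m_Q$ using a dimension count, which is the second assertion $\mu(0:_Q J) = \edim(Q) - n$ and in fact implies the first. By Remark~\ref{FPrmk}(a) and the Gorenstein duality, $\lambda(0:_Q J) = \lambda(Q/J)$, and $0:_Q(0:_Q J) = J$. Now $J + (0:_Q J) \supset J + \m_Q^2$ as shown, so $J + (0:_Q J)$ is an ideal between $0:_Q\m_Q^{s-1}$ and $\m_Q$; applying $0:_Q(-)$ reverses inclusions, giving $\m_Q^{s-1} = 0:_Q(0:_Q\m_Q^{s-1}) \supseteq 0:_Q(J + (0:_Q J)) = (0:_Q J)\cap(0:_Q(0:_Q J)) = (0:_Q J) \cap J$. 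But by Theorem~\ref{EXJ}(a) we already know $(0:_Q J)\cap J \subset \m_Q^{s-1}$, so in fact $(0:_Q J)\cap J = \m_Q^{s-1} = \soc(Q)$, a one-dimensional space (recall $\soc(Q) = J\m_Q \subset J$, and $\m_Q^{s-1}$ is the socle since $Q$ is Gorenstein with $\ell\ell(Q)=s$ — need $\m_Q^{s-1} = \soc(Q)$, which holds as $\ell\ell(A)=s$ forces $\m_Q^s \neq 0$, hence $\m_Q^s = \soc(Q)$; let me just use that $(0:_QJ)\cap J$ has dimension $1$ sitting inside $\soc Q$). Then by modular/length arithmetic: $\lambda(J + (0:_Q J)) = \lambda(J) + \lambda(0:_Q J) - \lambda(J \cap (0:_Q J)) = \lambda(J) + \lambda(Q/J) - 1 = \lambda(Q) - 1 = \lambda(\m_Q)$. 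Since $J + (0:_Q J) \subseteq \m_Q$ and the lengths agree, $J + (0:_Q J) = \m_Q$, proving the first claim. Finally, for $\mu(0:_Q J)$: from $J + (0:_Q J) = \m_Q$ and $(0:_Q J)\cap J = \soc(Q) \subset \m_Q^2 \cap J \subset \m_Q J + (0:_QJ)$-considerations — more directly, $(0:_Q J) + \m_Q^2 = \m_Q^2 + (0:_QJ)$ and $\m_Q/\m_Q^2 = (J + \m_Q^2)/\m_Q^2 \oplus \big((0:_QJ) + \m_Q^2\big)/\m_Q^2$ because the intersection $J \cap (0:_QJ) = \soc(Q) \subset \m_Q^2$ (here I use $s \geq 2$). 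Hence $\edim(Q) = \mu(J) + \mu(0:_Q J) = n + \mu(0:_Q J)$, giving $\mu(0:_Q J) = \edim(Q) - n$. The one point requiring care is the claim $J \cap (0:_Q J) \subseteq \m_Q^2$, i.e. that the generating directions of $J$ and of $0:_Q J$ are linearly independent mod $\m_Q^2$; this follows since $J \cap (0:_QJ) = \soc(Q) = \m_Q^s \subseteq \m_Q^2$ as $s \geq 2$.
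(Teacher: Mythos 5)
There is a genuine gap in the key step. Your length count requires $\lambda\bigl(J\cap(0:_QJ)\bigr)=1$, i.e.\ $J\cap(0:_QJ)=\soc(Q)$, but what you actually establish via the annihilator argument is only $J\cap(0:_QJ)\subseteq \m_Q^{s-1}$. These are not the same: since $\ell\ell(Q)=s$, Nakayama gives $\m_Q^{s-1}\supsetneq \m_Q^{s}=\soc(Q)$, so $\lambda(\m_Q^{s-1})\geq 2$; in fact $\lambda(\m_Q^{s-1})=\edim(A)+1$ by Theorem~\ref{EXJ}(e) and the symmetry of $H_A$, which can be arbitrarily large. You notice the mismatch in your parenthetical (claiming ``$\m_Q^{s-1}$ is the socle'' but then proving $\m_Q^{s}=\soc(Q)$), and the attempt to ``just use that $(0:_QJ)\cap J$ has dimension $1$'' is exactly what remains to be shown. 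The paper closes this gap with a small but essential additional argument: any $w\in J\cap(0:_QJ)$ lies in $J$, hence $w=\sum a_i z_i$; since $w\in\m_Q^{s-1}\subseteq\m_Q^2$ and the $z_i$ are independent mod $\m_Q^2$, each $a_i\in\m_Q$, so $w\in J\m_Q=\soc(Q)$ by Theorem~\ref{EXJ}(c). Without that step, the chain $\soc(Q)\subseteq J\cap(0:_QJ)\subseteq\m_Q^{s-1}$ leaves room for a strictly larger intersection, and the computation $\lambda(J+(0:_QJ))=\lambda(Q)-1$ does not go through.

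Your second claim also needs more than what is written. Decomposing $\m_Q/\m_Q^2$ as a direct sum of the images of $J$ and $0:_QJ$ gives $\edim(Q)=\dim_\sk\bigl((J+\m_Q^2)/\m_Q^2\bigr)+\dim_\sk\bigl(((0:_QJ)+\m_Q^2)/\m_Q^2\bigr)$, but these summands equal $\mu(J)$ and $\mu(0:_QJ)$ only after checking $J\cap\m_Q^2=\m_QJ$ and $(0:_QJ)\cap\m_Q^2=\m_Q(0:_QJ)$. The first holds by the chosen linear independence of the $z_i$; the second is where the paper works directly, showing $\m_Q^2=(0:_QJ)\m_Q$ (using $\m_Q=J+(0:_QJ)$, $J\cdot(0:_QJ)=0$, $J\m_Q=\soc(Q)$, and $s\geq 3$ to rule out $(0:_QJ)\m_Q=0$), which then makes the $\mu$-count a clean length computation. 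So while your overall strategy (reduce to $J\cap(0:_QJ)=\soc(Q)$ via duality, then count lengths) matches the paper in spirit, the two pivotal identities --- $J\cap(0:_QJ)=\soc(Q)$ and $\m_Q^2=(0:_QJ)\m_Q$ --- are asserted rather than proved, and both require the linear-independence-mod-$\m_Q^2$ mechanism that your draft leaves out.
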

\begin{proof}
Let $I = (0:_Q J)$. Since $Q$ is Gorenstein, in order to prove $J + I = \m_Q$, it is enough to show that $I \cap J = \soc(Q)$, and since $0 \neq J \neq Q$, we only need to prove $I \cap J \subset \soc(Q)$. 

Since $s \geq 3$, by Theorem \ref{EXJ}(a), it follows that $I \cap J \subset  \m_Q^{s-1} \subset \m_Q^2$. Therefore, the linear independence of $z_1,\ldots,z_n$ modulo $\m_Q^2$ shows that if $\sum_{i=1}^n a_iz_i \in I \cap J$, then $a_i \in \m_Q$. Thus,
$I \cap J \subset J \m_Q = \soc(Q)$, by Theorem \ref{EXJ}(c), proving $J + I = \m_Q$. 

Now $J\m_Q= \soc(Q)$, and $\mu(J) = n$ imply that $\lambda(J)=\lambda(J/J\m_Q) + 1= n+1$. Furthermore, $I + J = \m_Q$, and $I J = 0$ show that $\m_Q^2 = I \m_Q + \soc(Q)$. Since $s \geq 3$, we have $I\m_Q \neq 0$, and hence $\m_Q^2 = I \m_Q$. Thus, $\lambda(0:_QJ) = \lambda(Q/J)$ gives
\begin{align*}
\mu(I) = \lambda(I/\m_Q^2)& =\lambda(I)-\lambda(\m_Q^2)=\lambda(Q/J)-\lambda(\m_Q^2) =\lambda(Q/\m_Q^2)-\lambda(J)\\
& = (1+\edim(Q)) - (n + 1) = \edim(Q) - n
\end{align*}
proving the proposition.
\end{proof}

\begin{remark}\label{Imp} {\rm
With notation as above, we see that
$\langle \soc(G) \cap G_1\rangle =\m_B= \langle z_1^*, \ldots, z_n^*\rangle$ by Theorem \ref{EXJ}(b) and Proposition \ref{GLS}.
}\end{remark}

We are now ready to state and prove one of the main theorems of this section.

\begin{theorem}\label{StructureTheorem}
Let $(Q, \m_Q,\sk)$ be a Gorenstein Artin local ring, $G = \gr(Q)$ and $k$ be as in Setup \ref{setup}. If $k = 1$, then $Q$ decomposes as a connected sum over $\sk$. Moreover, we can write $Q \simeq R \#_\sk S$, where $(R,\m_R,\sk)$ and $(S,\m_S,\sk)$ are Gorenstein Artin local rings such that:
\begin{enumerate}[{\rm a)}]
\item $\edim(S) = \type(G) - 1$ and $\ell\ell(S) = 2$.
\item $\gr(R)\simeq G/\langle \soc(G)\cap G_{1}\rangle$. In particular, $\ell\ell(R) = s$ and $H_R(i) = H_Q(i)$ for $2 \leq i \leq s$.
\item If $m = \edim(R)$ and $n = \edim(S)$, then $[\p^R(t)]^{-1}=[\p^Q(t)]^{-1}-[\p^S(t)]^{-1} + 1 - \phi_{m,n}t^2$,
where $ \phi_{m,n}$ is given as in Theorem \ref{Characterization}(d) and, $\p^S(t)$ is as in Remark \ref{PS}(c).\\
Thus, $\p^Q(t)$ is rational in $t$ if and only if $\p^R(t)$ is so.
\end{enumerate}
\end{theorem}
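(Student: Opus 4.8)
The goal is to show that under Setup \ref{setup} with $k=1$, the ring $Q$ decomposes as a connected sum $R\#_\sk S$ with the listed properties. My strategy is to combine the characterization of connected sums (Theorem \ref{Characterization}) with the explicit ideals produced in Theorem \ref{EXJ} and Proposition \ref{basisprop}. The first move is to invoke Theorem \ref{EXJ}(b): choosing $J=\langle z_1,\dots,z_n\rangle$ as in Theorem \ref{EXJ} and $I=\langle y_1,\dots,y_m\rangle$ lifting a minimal generating set of $\m_A$, we get $\m_Q=\langle y_1,\dots,y_m,z_1,\dots,z_n\rangle$ as a minimal generating set. Next, since $G\simeq A\times_\sk B$ gives $y_i^*z_j^*=0$, Theorem \ref{EXJ}(d) yields $y_iz_j\in\m_Q^3$; but with $k=1$ we have $\m_Q^3=I^3$ by Theorem \ref{EXJ}(e), so each $y_iz_j\in I^3\cap J$ — actually I need $y_iz_j=0$. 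To get honest vanishing of $\y\cdot\z$, I would use Proposition \ref{basisprop}: $J+(0:_QJ)=\m_Q$, and since $I=\langle y_1,\dots,y_m\rangle$ minimally generates a complement, one checks $I\subseteq(0:_QJ)$ (the $y_i$'s can be adjusted within their residues mod $\m_Q^2$ to lie in $0:_QJ$, as $\edim(Q)-n=\mu(0:_QJ)=m$). Then $\y\cdot\z=0$ and Theorem \ref{Characterization} applies: $Q\simeq R\#_\sk S$ nontrivially (nontrivial since $m,n\geq 1$ as $n=\type(G)-1\geq 1$ and $m=\edim(A)\geq 1$).

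For part (a), Theorem \ref{Characterization} gives $S=\widetilde Q/J_S$ with $J_S=(I_Q\cap\langle\Z\rangle)+\langle\Y\rangle$, so $\m_S$ is generated by $n=\edim(B)=\type(G)-1$ elements (using Theorem \ref{EXJ}(a) and Proposition \ref{GLS} for $\edim(B)=\type(G)-1$). That $\ell\ell(S)=2$ follows because $\gr(S)\simeq\gr(Q)/\langle\Y\rangle^* \simeq G/\langle\y^*\rangle = B$ and $\ell\ell(B)=k=1$... wait, $\ell\ell(B)=1$ means $\m_B^2=0$, but $S$ is Gorenstein with $\edim(S)\geq 1$, so $\soc(S)\subseteq\m_S^2$ forces $\ell\ell(S)\geq 2$; combined with $\m_S^3\subseteq$ image of $\m_B^2=0$ plus socle considerations one gets $\ell\ell(S)=2$ exactly. (Concretely $S$ is the $\ell\ell=2$ Gorenstein ring with $\edim(S)=n$.) For part (b): by Theorem \ref{Characterization}, $R=\widetilde Q/J_R$, and $\gr(R)\simeq G/\langle\z^*\rangle = A$ by Proposition \ref{GLS}, which also gives $A\simeq G/\langle\soc(G)\cap G_1\rangle$. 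Hence $\ell\ell(R)=\ell\ell(A)=s$, and $H_R(i)=H_A(i)=H_G(i)=H_Q(i)$ for $2\leq i\leq s$ since $\langle\soc(G)\cap G_1\rangle$ sits in degree $1$.

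For part (c), the Poincaré series formula is precisely Theorem \ref{Characterization}(d) rearranged:
\[
\frac{1}{\p^Q(t)}=\frac{1}{\p^R(t)}+\frac{1}{\p^S(t)}-1-\phi_{m,n}t^2
\;\Longrightarrow\;
\frac{1}{\p^R(t)}=\frac{1}{\p^Q(t)}-\frac{1}{\p^S(t)}+1+\phi_{m,n}t^2.
\]
Wait — the stated formula has $1-\phi_{m,n}t^2$, so I should double-check the sign convention; but in any case it is a direct algebraic rearrangement of Theorem \ref{Characterization}(d), so no real work is needed beyond bookkeeping. Since $\ell\ell(S)=2$, Remark \ref{PS}(c) gives $\p^S(t)=(1-nt+t^2)^{-1}$ (or $(1-t)^{-1}$ if $n=1$), which is rational; hence $1/\p^S(t)$ is a polynomial, and the displayed identity shows $1/\p^R(t)$ and $1/\p^Q(t)$ differ by a polynomial. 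Therefore $\p^Q(t)$ is rational iff $\p^R(t)$ is rational, giving the final "Thus" clause.

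**Main obstacle.** The one genuinely non-formal point is upgrading $y_iz_j\in\m_Q^3$ (which is all that $G$'s fibre product structure directly gives) to $y_iz_j=0$, i.e. choosing the generators $y_i$ so that $\y\cdot\z$ vanishes on the nose rather than merely up to higher order. This is exactly what Proposition \ref{basisprop} is for: it identifies $(0:_QJ)$ as an ideal with $\mu(0:_QJ)=\edim(Q)-n=m$ and $J+(0:_QJ)=\m_Q$, so a minimal generating set of $(0:_QJ)$ serves as the $y_i$'s and automatically annihilates every $z_j$. Everything downstream (applying Theorem \ref{Characterization}, reading off $\gr(R)$, $\ell\ell(S)$, and the Poincaré identity) is then routine.
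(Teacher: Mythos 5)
Your proposal is correct and follows essentially the same route as the paper: the decisive step is invoking Proposition \ref{basisprop} to replace the naive lift of $\m_A$-generators (which only gives $y_iz_j\in\m_Q^3$) by a minimal generating set of $0:_QJ$, so that $\y\cdot\z=0$ holds exactly, after which Theorem \ref{Characterization} does the rest; the identification of $\gr(R)$ and the Poincar\'e-series rearrangement are also handled as in the paper. The only soft spots are minor bookkeeping: your argument that $\ell\ell(S)=2$ wobbles (indeed $\gr(S)\not\simeq B$; the paper instead tracks $\langle\Z\rangle^2\not\subset J_S\supset\langle\Z\rangle^3$ inside the Cohen presentation, using $J^2=\soc(Q)\neq 0=J^3$), and the claim $\gr(R)\simeq G/\langle\z^*\rangle$ should be justified via Theorem \ref{Characterization}(e) (i.e.\ Proposition \ref{Prop2}) together with Remark \ref{Imp}, rather than attributed to Proposition \ref{GLS} alone.
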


\begin{proof}
By Proposition~\ref{basisprop}, we
have $\m_Q = \langle y_1,\ldots,y_m, z_1,\ldots,z_n\rangle$ where $n = \type(G) - 1 \geq 1$,
$\y\cdot \z=0$ and $\langle \z  \rangle^2 \neq 0 = \langle \z \rangle^3$. Let $\widetilde{Q}/I_Q$ be a Cohen presentation of $Q$, where $\m_{\widetilde Q} = \langle \Y,\Z\rangle$, $\langle \Y \cdot \Z\rangle + \langle \Z  \rangle^3 \subset I_Q \subset\m_{\widetilde Q}^2$, and $\langle \Z  \rangle^2 \not\subset I_Q$. Observe that $\langle \z \rangle^3 = 0$ and $s \geq 3$ force $m \geq 1$.

Now, if $J_R = (I_Q \cap \langle \Y\rangle) + \langle \Z \rangle$ and $J_S = (I_Q \cap \langle \Z\rangle) + \langle \Y \rangle$, then by Theorem \ref{Characterization}, $R = \widetilde Q/J_R$ and
$S= \widetilde Q/J_S$ are Gorenstein Artin such that $Q\simeq R\#_\sk S$. Moreover, $J_S \cap \langle \Z \rangle = I_Q \cap \langle \Z \rangle$. Hence, $\langle \Z  \rangle^3 \subset J_S$ and $\langle \Z  \rangle^2 \not\subset J_S$. Thus $\m_S^3 = 0 \neq \m_S^2$, i.e., $\ell\ell(S) = 2$, proving (a).

Since $\ell\ell(Q) \geq 3 > \ell\ell(S)$, we see that $\ell\ell(R) = \ell\ell(Q) \neq \ell\ell(S)$. Hence, by (the proof of) Proposition \ref{Prop2}, $G \simeq \gr(R) \times_\sk \gr(S/\soc(S))$. In particular, $\gr(R) \simeq G/\langle z_1^*,\ldots, z_n^*\rangle$. By Remark \ref{Imp}, we get $\gr(R) \simeq G/\langle \soc(G)\cap G_{1}\rangle$, proving (b).

The remaining statements follow from Theorem \ref{Characterization}.
\end{proof}

\begin{remark}{\rm
The condition that $A$ is a graded Gorenstein ring is necessary in the above theorem. Example \ref{FPex1} gives a counter-example when $A$ is not graded Gorenstein.
}\end{remark}

The next corollary, which gives a sufficient condition for $\gr(Q)$ to be Gorenstein, is an immediate consequence of Proposition \ref{GLS} and the above theorem.

\begin{corollary}
Let $(Q, \m_Q,\sk)$ be a Gorenstein Artin local ring which is indecomposable as a connected sum, and let $G = \gr(Q)$. If $\ell\ell(Q) \geq 3$, and $\dim_{\sk} (\soc(G)\cap (\m_G)^2)=1$, then $G$ is Gorenstein.
\end{corollary}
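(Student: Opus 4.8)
The plan is to derive this corollary directly from the two results just established, Proposition~\ref{GLS} and Theorem~\ref{StructureTheorem}, via a contrapositive argument. First I would assume $\ell\ell(Q) \geq 3$ and $\dim_{\sk}(\soc(G) \cap (\m_G)^2) = 1$, and suppose for contradiction that $G$ is \emph{not} Gorenstein. Since $Q$ is Gorenstein Artin, its associated graded ring $G$ is Artinian local with $\ell\ell(G) = \ell\ell(Q) = s \geq 3 \geq 2$, so the equivalence of conditions (i), (ii), (iii) in Proposition~\ref{GLS} applies: condition (iii) is exactly the hypothesis $\dim_{\sk}(\soc(G) \cap (\m_G)^2) = 1$. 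Thus condition (i) holds, i.e.\ $G$ is as in Setup~\ref{setup} with $\ell\ell(B) = 1$. Here I should be a little careful: a priori (iii) only gives that $G$ decomposes nontrivially as $G \simeq A \times_\sk B$ with $A$ graded Gorenstein and $\ell\ell(B) = 1$, and one needs $s > \ell\ell(B) + 1 = 2$ for Setup~\ref{setup} to literally apply — but this is guaranteed since $\ell\ell(A) = \ell\ell(G) = s \geq 3$, as noted in the last part of Proposition~\ref{GLS}.

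Having placed ourselves in Setup~\ref{setup} with $k = \ell\ell(B) = 1$, I would then invoke Theorem~\ref{StructureTheorem}: since $k = 1$, the ring $Q$ decomposes as a connected sum over $\sk$, say $Q \simeq R \#_\sk S$ with $R \not\simeq Q \not\simeq S$ (the decomposition being non-trivial, as the theorem states $\edim(S) = \type(G) - 1 \geq 1$ and $\ell\ell(S) = 2 < s = \ell\ell(R)$, so both components are proper). This contradicts the hypothesis that $Q$ is indecomposable as a connected sum over $\sk$. Hence $G$ must be Gorenstein.

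I do not expect any real obstacle here — the corollary is essentially a repackaging, and the only thing to watch is making sure the hypotheses of the cited results are all met (in particular checking that (iii) of Proposition~\ref{GLS} does yield the full Setup~\ref{setup}, and that the resulting connected sum decomposition is genuinely non-trivial so that it contradicts indecomposability). One clean way to phrase the whole argument is contrapositively: if $G$ is not Gorenstein and $\dim_\sk(\soc(G) \cap (\m_G)^2) = 1$, then by Proposition~\ref{GLS} we are in Setup~\ref{setup} with $\ell\ell(B) = 1$, and since $\ell\ell(Q) \geq 3$, Theorem~\ref{StructureTheorem} shows $Q$ decomposes nontrivially as a connected sum. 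A short paragraph to this effect, with precise references to the two results and to the last sentence of Proposition~\ref{GLS} for the Loewy length comparison, should suffice.

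\begin{proof}
We prove the contrapositive. Suppose $\ell\ell(Q) \geq 3$ and $\dim_{\sk}(\soc(G) \cap (\m_G)^2) = 1$, but $G$ is not Gorenstein. Since $Q$ is Gorenstein Artin, $G$ is Artinian local with $\ell\ell(G) = \ell\ell(Q) \geq 3 \geq 2$ by Remark~\ref{GR}(a). Hence Proposition~\ref{GLS} applies, and condition (iii) there is precisely our hypothesis $\dim_{\sk}(\soc(G) \cap (\m_G)^2) = 1$. Therefore condition (i) holds: $G$ is as in Setup~\ref{setup} with $\ell\ell(B) = 1$, where moreover $\ell\ell(A) = \ell\ell(G) = \ell\ell(Q) \geq 3 > \ell\ell(B) + 1$ by the last part of Proposition~\ref{GLS}, so that $s > k + 1$ as required in the setup. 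Now Theorem~\ref{StructureTheorem} (with $k = 1$) shows that $Q$ decomposes as a connected sum over $\sk$, $Q \simeq R \#_\sk S$, where $\ell\ell(S) = 2$ and $\ell\ell(R) = s = \ell\ell(Q) \geq 3$; in particular $R \not\simeq Q \not\simeq S$, so the decomposition is non-trivial and $Q$ is decomposable as a connected sum over $\sk$. This proves the contrapositive, and hence the corollary.
\end{proof}
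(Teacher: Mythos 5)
Your proof is correct and follows the paper's intended route exactly: the paper states the corollary as an immediate consequence of Proposition~\ref{GLS} and Theorem~\ref{StructureTheorem}, and your contrapositive argument carefully unwinds precisely that chain of implications, including the two points worth checking (that $\ell\ell(Q)\geq 3$ ensures $s>k+1$ as required in Setup~\ref{setup}, and that the resulting decomposition is genuinely non-trivial, so it does contradict indecomposability).
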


\subsection{The \boldmath$\edim(A) = 1$ case}
We now focus on the case where $\edim(A) = 1$ in Setup \ref{setup}. We identify some conditions on the ideals $I$ and $J$ defined in Theorem \ref{EXJ}, which force  $Q$ to be a connected sum over $\sk$. As an immediate consequence, we see that $Q$ is a connected sum over $\sk$, where $\edim(A)=1$ and $\ell\ell(B)=2$, assuming $\ell\ell(A) \geq 4$.

\begin{proposition}\label{LL=k}
With the notation as in Setup \ref{setup}, let $\edim(A) = 1$, and let $I = \langle y \rangle$ and $J = \langle z_1, \ldots, z_n \rangle$ be the ideals defined in Theorem \ref{EXJ}. If $IJ\subset \m_Q^{k+1}$, then there is an ideal $J'$ in $Q$ such that $IJ' = 0$, $I + J' = \m_Q$, $\mu(J') = \edim(Q) - 1$, and $(J')^{k+1} = \soc(Q)$.
\end{proposition}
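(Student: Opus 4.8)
The plan is to modify the generator $y$ of $I$ by adding a suitable element of $J\m_Q^{k-1}$ so that the ideal generated by the modified element kills $J$ on the nose, and then to transfer the structural conclusions of Theorem \ref{EXJ} across this modification. The starting observation is that, by Theorem \ref{EXJ}(d)--(e) and the hypothesis $\edim(A)=1$, we have $\m_Q^i = I^i = \langle y^i\rangle$ for $i\geq k+1$, and by Theorem \ref{EXJ}(c), $J\m_Q^k = \soc(Q)$ is one-dimensional. The assumption $IJ\subset \m_Q^{k+1}$ upgrades the general conclusion $IJ\subset\m_Q^3$ of Theorem \ref{EXJ}(d): we get $yz_j\in\m_Q^{k+1}=\langle y^{k+1}\rangle$, so we can write $yz_j = y^{k+1}c_j$ for scalars (or elements) $c_j$, and hence $y(z_j - y^k c_j)=0$ after absorbing a unit. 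This suggests defining $z_j' = z_j - y^k c_j$ (more precisely, subtracting the appropriate element of $\langle y^k\rangle = \m_Q^k$ so that $yz_j'=0$) and $J' = \langle z_1',\ldots,z_n'\rangle$. The first key step is to make this replacement precise and check $IJ'=0$.

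Next I would verify the three remaining claims. For $I + J' = \m_Q$: since $z_j \equiv z_j' \pmod{\m_Q^k}$ and $\m_Q^k \subset \m_Q^2 \subset I + J$ is handled by noting $\m_Q^k = I^k + (\text{lower }J\text{-terms})$... more cleanly, since $z_j - z_j' \in \m_Q^k \subset \m_Q^2$, the images of $z_1',\ldots,z_n'$ in $\m_Q/\m_Q^2$ coincide with those of $z_1,\ldots,z_n$, so $\{y, z_1',\ldots,z_n'\}$ still minimally generates $\m_Q$; hence $I + J' = \m_Q$ and $\mu(J') = n = \edim(Q)-1$, the last equality because $\edim(Q) = \edim(A) + \edim(B) = 1 + n$ by Remark \ref{FP} together with Theorem \ref{EXJ}(a). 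For $(J')^{k+1} = \soc(Q)$: since $IJ'=0$ and $\m_Q = I + J'$, we get $\m_Q^{k+1} = \sum_{a+b=k+1} I^a (J')^b$; the terms with $a\geq 1$ and $b\geq 1$ vanish because $I(J')=0$, the term $a=k+1$ is $I^{k+1}=\soc(Q)$ (nonzero, one-dimensional, contained in $\m_Q^{k+1}$), and so $\m_Q^{k+1} = I^{k+1} + (J')^{k+1}$; one then argues $(J')^{k+1}\neq 0$ — for instance because $z_j' \equiv z_j\pmod{\m_Q^k}$ forces $(z_j')^{k+1} \equiv z_j^{k+1}\pmod{\m_Q^{2k+1}}$ and, using $J^k\not\subset\m_Q^{k+1}$ from the proof of Theorem \ref{EXJ}(c) together with degree bookkeeping, $(J')^k$ is not contained in $\m_Q^{k+1}$, so $(J')^{k+1}$ meets $\m_Q^k\cdot\m_Q = \m_Q^{k+1}$ nontrivially. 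Since $Q$ is Gorenstein Artin and $\soc(Q)$ is the unique minimal nonzero ideal, $(J')^{k+1}\neq 0$ forces $\soc(Q)\subset (J')^{k+1}$; combined with $(J')^{k+1}\subset\m_Q^{k+1}$ and $J'\m_Q^{k+1}=0$ (as $J'\m_Q^k\subset J\m_Q^k + \m_Q^{2k} = \soc(Q)$, hence $J'\m_Q^{k+1}=0$), we conclude $(J')^{k+1} = \soc(Q)$.

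The main obstacle I anticipate is the bookkeeping in showing $(J')^{k+1}\neq 0$, i.e., that the modification by elements of $\m_Q^k$ does not accidentally collapse the $(k+1)$-st power of $J'$. The safest route is to stay at the level of initial forms: the replacement $z_j\mapsto z_j'$ changes $z_j$ only in degrees $\geq k$ in the $\m_Q$-adic filtration, so it does not change $z_j^*\in\m_Q/\m_Q^2$; since $\gr(Q)\simeq A\times_\sk B$ with $(z_1^*,\ldots,z_n^*)$ a minimal generating set of $\m_B$ (Theorem \ref{EXJ}(b)) and $\m_B^k\neq 0$, a monomial of degree $k$ in the $z_j^*$ is nonzero in $G$, which forces the corresponding degree-$k$ monomial in the $z_j'$ to lie in $\m_Q^k\setminus\m_Q^{k+1}$; multiplying by a well-chosen $z_i'$ (or by $y^k$, using $\m_Q^k\cap\soc$-considerations) then produces a nonzero element of $(J')^{k+1}$. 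Everything else is a routine transfer of the identities in Theorem \ref{EXJ} along the unit-times-$y$-preserving substitution, using that $Q$ is Gorenstein to pass between $I\cap J'=\soc(Q)$ and $I+J'=\m_Q$.
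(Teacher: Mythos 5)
Your one-shot modification is a genuinely different route from the paper. The paper's proof only uses the weaker containment $I^kJ\subset\soc(Q)=\langle y^s\rangle$ (valid without the extra hypothesis) to make a first, conservative modification $z_j\mapsto z_j-u_jy^{s-k}$, getting $I^kJ_1=0$; it then invokes the hypothesis $IJ\subset\m_Q^{k+1}$ inside an inductive step to show $I^{k-1}J_1\subset\soc(Q)$, and repeats the modification $k$ times to reach $IJ_k=0$. You instead use the hypothesis at full strength immediately: $\m_Q^{k+1}=I^{k+1}=\langle y^{k+1}\rangle$ by Theorem \ref{EXJ}(e), so $yz_j=c_jy^{k+1}$, and setting $z_j'=z_j-c_jy^k$ kills $IJ'$ in a single step. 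Since $k\geq 2$, the replacement changes $z_j$ only modulo $\m_Q^2$, so $\{y,z_1',\ldots,z_n'\}$ still minimally generates $\m_Q$ and $\mu(J')=n=\edim(Q)-1$. This is a real simplification — no induction is needed.

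That said, your treatment of the last claim $(J')^{k+1}=\soc(Q)$ contains two false intermediate assertions and a hand-waved step. First, $I^{k+1}\neq\soc(Q)$: by Theorem \ref{EXJ}(e), $I^{k+1}=\m_Q^{k+1}$, which has length $s-k\geq 2$ under the standing hypothesis $s>k+1$, while $\soc(Q)$ has length $1$. Second, $J\m_Q^k+\m_Q^{2k}$ need not equal $\soc(Q)$ — $\m_Q^{2k}=\langle y^{2k}\rangle$ strictly contains $\soc(Q)$ whenever $2k<s$. Third, your argument that $(J')^k\not\subset\m_Q^{k+1}$ implies $(J')^{k+1}\neq 0$ is not justified as stated. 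All of these can be bypassed cleanly, and this is exactly what the paper does at the corresponding stage of each iteration: verify that $J'$ satisfies the same hypotheses as $J$ in Theorem \ref{EXJ}. Indeed $J'\m_Q^{k+1}=J'I^{k+1}=I^k(IJ')=0$, so $J'\subset(0:_Q\m_Q^{k+1})$, and since $z_j'\equiv z_j\pmod{\m_Q^2}$ the images of the $z_j'$ form the required basis. Theorem \ref{EXJ}(c) then gives $J'\m_Q^k=\soc(Q)$ directly, and since $IJ'=0$ one gets $\soc(Q)=J'(I+J')^k=(J')^{k+1}$ without any separate non-vanishing argument. With that repair your proof is complete and shorter than the paper's.
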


\begin{proof}
By Theorem \ref{EXJ}(c), we have $I^kJ\subset \soc(Q) = \langle y^s \rangle$. Thus for each $j = 1, \ldots, n$, there exists $u_j \in Q$, such that $y^k z_j = u_j y^s$. Define $z_j' = z_j -u_j y^{s-k}$ and set $J_1=\langle z'_1,\ldots,z'_n\rangle$. 

Observe that $J \m_Q^{k+1} = 0$ implies $J_1 \m_Q^{k+1} = 0$. Moreover, $J+\m_Q^2=J_1+\m_Q^2$ implies, by the choice of $J$, that the images of $z_1',\ldots,z_n'$ form a $\sk$-basis for $((0:_Q\m_Q^{k+1})+\m_Q^2)/\m_Q^2$. Thus $J_1$ satisfies the same hypothesis, and hence the same conclusions, of Theorem \ref{EXJ} as $J$. Furthermore, we also have $I^kJ_1 = 0$.

We claim that this forces $I^{k-1}J_1 \subset \soc(Q)$. In order to see this, since $\m_Q = I + J_1$, and $I^kJ_1 = 0$, it is enough to show that $I^{k-1}J_1^2 = 0$. Now, $I^{k-1}J_1^2 \subset I^{k-1}(J+I^{s-k})J_1 \subset \m_Q^{k+1}J_1$ by the given hypothesis, since $s > k + 1$, and $k \geq 2$. But, by Theorem \ref{EXJ}(c) applied to $J_1$, we have $J_1 \m_Q^{k+1} = 0$, proving $I^{k-1}J_1 \subset \soc(Q)$. 

The same argument shows that there exists an ideal $J_2$, satisfying the same hypothesis, and conclusions, as $J$ in Theorem \ref{EXJ}, such that $I^{k-1}J_2 = 0$. Repeating this process $k$ times, we get ideals $J_1$, \ldots, $J_k$, each satisfying the same conclusions of Theorem \ref{EXJ} as $J$, and further satisfying $I^{k-i+1}J_i = 0$ for $1 \leq i \leq k$. 

Thus, we have $IJ_k = 0$. Moreover, Theorem \ref{EXJ} applied to $J' = J_k$ gives $I + J' = \m_Q$, and $\mu(J') = n = \edim(Q) - 1 \geq 1$. Finally, $\soc(Q) = J'\m_Q^k = J'(I + J')^k = (J')^{k+1}$, proving the result.
\end{proof}

The following important theorem is a consequence of Proposition \ref{LL=k} and Theorem \ref{Characterization},
\begin{theorem}\label{Loewy(B)=k}
With the notation as in Setup \ref{setup}, let $\edim(A) = 1$, and let $I$ and $J$ be the ideals defined in Theorem \ref{EXJ}. If $IJ\subset \m_Q^{k+1}$, then there are Gorenstein Artin local rings $(R,\m_R,\sk)$ and $(S,\m_S,\sk)$ such that $Q\simeq R\#_{\sk}S$ where
\begin{enumerate}[{\rm(a)}]

\item $R$ is a hypersurface, with $\ell\ell(R) = s$, $\gr(R) \simeq A$, and $H_R(i) = H_Q(i)=1$ for $k+1 \leq i \leq s$,
\item $\ell\ell(S)=k+1$, and $\displaystyle{{\large \frac{1}{\p^{Q}(t)} =  \frac{1}{\p^S(t)}}- t}$.\\
Thus, $\p^Q(t)$ is rational in $t$ if and only if $\p^S(t)$ is so.
\end{enumerate}

\end{theorem}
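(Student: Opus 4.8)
The plan is to derive everything from Proposition~\ref{LL=k} together with the characterization of connected sums in Theorem~\ref{Characterization}. By Proposition~\ref{LL=k}, under the hypothesis $IJ \subset \m_Q^{k+1}$ we obtain an ideal $J'$ with $IJ' = 0$, $I + J' = \m_Q$, $\mu(J') = \edim(Q) - 1$, and $(J')^{k+1} = \soc(Q)$. Writing $I = \langle y \rangle$ and $J' = \langle z_1, \ldots, z_n \rangle$ (with $n = \edim(Q) - 1 \geq 1$), the relations $y z_j = 0$ give exactly the condition $\m_Q = \langle y, z_1, \ldots, z_n \rangle$ with $\{y\} \cdot \{z_1, \ldots, z_n\} = 0$. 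So by Theorem~\ref{Characterization}, $Q$ decomposes nontrivially as $Q \simeq R \#_\sk S$ with $R = \widetilde Q / J_R$, $S = \widetilde Q / J_S$, where $J_R = (I_Q \cap \langle Y \rangle) + \langle \Z \rangle$ and $J_S = (I_Q \cap \langle \Z \rangle) + \langle Y \rangle$, and $\widetilde Q / I_Q$ is a Cohen presentation of $Q$ with $\m_{\widetilde Q} = \langle Y, Z_1, \ldots, Z_n \rangle$.

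For part (a), I would argue as follows. Since $\edim(R) = \edim(I) = 1$ (the single generator $y$), $R$ is a hypersurface, i.e.\ of the form $\sk[[Y]]/\langle Y^t \rangle$ for some $t$; the value $t = s+1$ follows from $\ell\ell(R) = \ell\ell(Q) = s$, which in turn holds because $\ell\ell(S) = k+1 < s$ (established below) and the Loewy length of a connected sum is the max of the components' Loewy lengths when they differ — more precisely, from Theorem~\ref{EXJ}(e), $\m_Q^i = I^i = \langle y^i \rangle$ for $i \geq k+1$, so $H_Q(i) = 1$ for $k+1 \leq i \leq s$ and $H_Q(s+1) = 0$. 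Since $\ell\ell(R) \neq \ell\ell(S)$, Proposition~\ref{Prop2} (more precisely its proof, Theorem~\ref{Characterization}(e)) gives $\gr(Q) \simeq \gr(R) \times_\sk \gr(S/\soc(S))$; comparing this with $G \simeq A \times_\sk B$ and matching the component of embedding dimension $1$ and Loewy length $s$ identifies $\gr(R) \simeq A$. Finally $H_R(i) = H_Q(i) = 1$ for $k+1 \leq i \leq s$ follows from $\gr(R) \simeq A$ and $\m_G^i = \m_A^i$ for $i \geq k+1$ (Remark~\ref{FPrmk}(c)).

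For part (b), the Loewy length of $S$: from $J_S \cap \langle \Z \rangle = I_Q \cap \langle \Z \rangle$ and the fact that $(J')^{k+1} = \soc(Q) \neq 0$ while $(J')^{k+2} = 0$ (since $\soc(Q) \m_Q = 0$ and $J' \subset \m_Q$), we get $\langle \Z \rangle^{k+2} \subset I_Q$ but $\langle \Z \rangle^{k+1} \not\subset I_Q$; this translates to $\m_S^{k+2} = 0 \neq \m_S^{k+1}$, i.e.\ $\ell\ell(S) = k+1$. The Poincar\'e series formula comes from Theorem~\ref{Characterization}(d): with $m = \edim(R) = 1$, $n = \edim(S) = \type(G) - 1 \geq 1$, we have $\phi_{1,n} = 0$ (the only nonzero cases are $m,n \geq 2$ giving $1$, and $m=n=1$ giving $-1$). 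Since $R = \sk[[Y]]/\langle Y^{s+1}\rangle$ is a hypersurface, $\p^R(t) = (1-t)^{-1}$, so $[\p^R(t)]^{-1} = 1 - t$. Substituting into $[\p^Q(t)]^{-1} = [\p^R(t)]^{-1} + [\p^S(t)]^{-1} - 1 - \phi_{1,n} t^2 = (1 - t) + [\p^S(t)]^{-1} - 1$ yields $[\p^Q(t)]^{-1} = [\p^S(t)]^{-1} - t$, and the rationality equivalence is immediate.

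The main obstacle I anticipate is the bookkeeping around the Cohen presentation: one must be careful that the ideal $J'$ produced by Proposition~\ref{LL=k} is indeed a set of linear-form generators complementing $y$ (which it is, since $\mu(J') = \edim(Q) - 1$ and $I + J' = \m_Q$ force $J'$ to be generated by elements whose initial forms span a complement to $y^*$ in $\m_G/\m_G^2$), so that the hypotheses of Theorem~\ref{Characterization} genuinely apply; and that the passage from the various degree/power conditions on $J'$ in $Q$ to the corresponding conditions on $\langle \Z \rangle$ in $\widetilde Q$ modulo $I_Q$ is done correctly, in particular verifying $\ell\ell(S) = k+1$ rather than something smaller. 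Everything else is a direct appeal to the cited results.
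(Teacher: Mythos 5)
Your proof follows the paper's route exactly: apply Proposition~\ref{LL=k} to replace $J$ by an ideal $J'$ with $IJ' = 0$, invoke Theorem~\ref{Characterization} to get $Q \simeq R \#_\sk S$, determine $\ell\ell(S)$ from the Cohen presentation, identify $\gr(R) \simeq A$ via Theorem~\ref{Characterization}(e), and read the Poincar\'e series formula off Theorem~\ref{Characterization}(d) using $\p^R(t)=(1-t)^{-1}$. One stylistic point: the ``matching by invariants'' step is more cleanly done as in the paper, via $\gr(R)\simeq G/\langle z_1^*,\ldots,z_n^*\rangle = G/\m_B \simeq A$, which uses Theorem~\ref{EXJ}(b) directly rather than appealing to any uniqueness of the fibre-product factor.

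Two more substantive remarks. First, a harmless slip: $n = \edim(S) = \edim(Q)-1 = \edim(B)$, not $\type(G)-1$; the latter equals $\type(B)$, which coincides with $\edim(B)$ only when $k=1$. Second, an oversight that you in fact share with the paper: you assert $\phi_{1,n}=0$, yet your own parenthetical reminder records $\phi_{1,1}=-1$, and nothing in Setup~\ref{setup} excludes $\edim(B)=1$, i.e.\ $n=1$. In that case $Q$ is the connected sum of two hypersurfaces, hence a complete intersection of embedding dimension $2$, so $[\p^Q(t)]^{-1}=(1-t)^2=1-2t+t^2$, whereas $[\p^S(t)]^{-1}-t=1-2t$. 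The correct identity coming out of Theorem~\ref{Characterization}(d) is $[\p^Q(t)]^{-1}=[\p^S(t)]^{-1}-t-\phi_{1,n}t^2$, which reduces to the displayed formula only when $n\geq 2$. The paper's proof (``The other conclusions follow from Theorem \ref{Characterization}(b) and (d)'') glosses over this as well; either the theorem statement should assume $\edim(B)\geq 2$ or retain the $\phi_{1,n}t^2$ term. The rationality equivalence at the end of part (b) is unaffected either way, since the discrepancy is a polynomial.
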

\begin{proof}
By Proposition \ref{LL=k}, letting $I = \langle y \rangle$, there exists an ideal $J' = \langle z_1,\ldots, z_n \rangle$ with $n = \edim(Q) - 1 \geq 1$, such that $\m_Q =\langle y,z_1,\ldots,z_n\rangle$, where $y\cdot{\bf{z}}=0$ and $\langle{\bf{z}}\rangle^{k+1}\neq 0=\langle{\bf{z}}\rangle^{k+2}$. 
Let $\widetilde{Q}/I_Q$ be a Cohen presentation of $Q$, where $\m_{\widetilde Q} = \langle Y,\Z\rangle$, where $Y$ and $\Z$ are lifts of $y$ and $\z$ respectively. Then $\langle Y \cdot \Z\rangle + \langle \Z  \rangle^{k+2} \subset I_Q \subset\m_{\widetilde Q}^2$, and $\langle \Z  \rangle^{k+1} \not\subset I_Q$.

Now, if $J_R = (I_Q \cap \langle Y\rangle) + \langle \Z \rangle$ and $J_S = (I_Q \cap \langle \Z\rangle) + \langle Y \rangle$, then by Theorem \ref{Characterization}, $R = \widetilde Q/J_R$ and
$S= \widetilde Q/J_S$ are Gorenstein Artin such that $Q\simeq R\#_\sk S$. Moreover, $J_S \cap \langle \Z \rangle = I_Q \cap \langle \Z \rangle$. Hence, $\langle \Z  \rangle^{k+2} \subset J_S$ and $\langle \Z  \rangle^{k+1} \not\subset J_S$. Thus $\m_S^{k+2} = 0 \neq \m_S^{k+1}$, i.e., $\ell\ell(S) = k+1$.

Note that $\edim(R)=1$, and $R$ is Gorenstein Artin, hence $R$ is a hypersurface. Since $\ell\ell(Q) = s > k+1 = \ell\ell(S)$, we see that $\ell\ell(R) = s \neq \ell\ell(S)$. Hence, by Theorem \ref{Characterization}(e), we get $G \simeq \gr(R) \times_\sk \gr(S/\soc(S))$. In particular, $\gr(R) \simeq G/\langle z_1^*,\ldots, z_n^*\rangle \simeq A$, since $\m_B=\langle z_1^*,\ldots, z_n^*\rangle$ by Theorem \ref{EXJ}.

The other conclusions follow from Theorem \ref{Characterization}(b) and (d).
\end{proof}

By Theorem \ref{EXJ}(d), $IJ \subset \m_Q^3$. Hence, with $k = 2$ in the above theorem, the following corollary is immediate.

\begin{corollary}\label{Loewy(B)=2}
Let $(Q,\m_Q,\sk)$ be a Gorenstein Artin ring with $\ell\ell(Q)=s\geq 4$. Suppose $G=\gr(Q)\simeq A\times_{\sk}B$ with $\edim(A)=1$ and $\ell\ell(B)=2$. Then there are Gorenstein Artin local rings $(R,\m_R,\sk)$ and $(S,\m_S,\sk)$ such that $Q\simeq R\#_{\sk}S$, where $R$ is a hypersurface and $S$ is a short Gorenstein ring. Moreover, $\p^Q(t)$ is rational in $t$ if and only if $\p^S(t)$ is so.
\end{corollary}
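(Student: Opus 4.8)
The plan is to derive everything directly from Theorem \ref{Loewy(B)=k} with $k=2$. First I would check that the hypotheses of Theorem \ref{Loewy(B)=k} are met: we are given Setup \ref{setup} with $\edim(A)=1$ and $\ell\ell(B)=k=2$, and the condition $s>k+1$ from Setup \ref{setup} is exactly $s\geq 4$, which is our assumption. The only remaining hypothesis of Theorem \ref{Loewy(B)=k} is the inclusion $IJ\subset\m_Q^{k+1}=\m_Q^3$, where $I$ and $J$ are the ideals of Theorem \ref{EXJ}; but Theorem \ref{EXJ}(d) gives $IJ\subset\m_Q^3$ unconditionally, so there is nothing to verify. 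Hence Theorem \ref{Loewy(B)=k} applies and produces Gorenstein Artin local rings $R$ and $S$ with $Q\simeq R\#_\sk S$.

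Next I would translate the conclusions of Theorem \ref{Loewy(B)=k} into the language of the corollary. Part (a) of that theorem says $\edim(R)=1$ and $R$ is Gorenstein Artin, i.e.\ $R$ is a hypersurface, and part (b) says $\ell\ell(S)=k+1=3$. A Gorenstein Artin ring of Loewy length $3$ satisfies $\m_S^4=0$, so by the definition of \emph{short} (namely $\m_T^4=0$ for a Gorenstein Artin ring $T$), $S$ is a short Gorenstein ring. This gives the first assertion. For the Poincar\'e series statement, part (b) of Theorem \ref{Loewy(B)=k} gives the identity $[\p^Q(t)]^{-1}=[\p^S(t)]^{-1}-t$; since $t\mapsto [\p^S(t)]^{-1}-t$ and $t\mapsto [\p^Q(t)]^{-1}+t$ are rational whenever the other side is, $\p^Q(t)$ is rational in $t$ precisely when $\p^S(t)$ is.

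There is essentially no obstacle here: the corollary is a direct specialization of Theorem \ref{Loewy(B)=k} with $k=2$, the key point being that Theorem \ref{EXJ}(d) supplies the inclusion $IJ\subset\m_Q^3$ for free in this case (it is automatic precisely when $k=2$, since then $\m_Q^{k+1}=\m_Q^3$). The only mildly substantive remark to include is the identification of an $\ell\ell=3$ Gorenstein ring with a short one via the definition. The proof can therefore be written in a few lines.

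\begin{proof}
Since $s\geq 4$, we have $s > 3 = k+1$, so Setup \ref{setup} holds with $k=2$. By Theorem \ref{EXJ}(d), the ideals $I$ and $J$ of Theorem \ref{EXJ} satisfy $IJ\subset\m_Q^3=\m_Q^{k+1}$. Thus the hypotheses of Theorem \ref{Loewy(B)=k} are satisfied with $k=2$, and it yields Gorenstein Artin local rings $(R,\m_R,\sk)$ and $(S,\m_S,\sk)$ with $Q\simeq R\#_\sk S$ such that $R$ is a hypersurface and $\ell\ell(S)=k+1=3$, whence $\m_S^4=0$, i.e.\ $S$ is a short Gorenstein ring. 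Finally, Theorem \ref{Loewy(B)=k}(b) gives $[\p^Q(t)]^{-1}=[\p^S(t)]^{-1}-t$, so $\p^Q(t)$ is rational in $t$ if and only if $\p^S(t)$ is so.
\end{proof}
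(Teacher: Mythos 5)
Your proposal is correct and follows exactly the paper's reasoning: the paper derives this corollary as an immediate specialization of Theorem \ref{Loewy(B)=k} with $k=2$, using Theorem \ref{EXJ}(d) to supply $IJ\subset\m_Q^3$ for free. Your write-up simply spells out the details that the paper leaves implicit.
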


With notations as in Setup \ref{setup}, the conditions $IJ \subset \m^{k+1}$, and $\edim(A) = 1$ are both necessary in Theorem \ref{Loewy(B)=k}, as can be seen below. In both the following examples, we denote $x,y,z$ as images of $X,Y,Z$ respectively in $Q$. Note that $Q$ is indecomposable as a connected sum over $\mathbb{Q}$ by Remark~\ref{indecomp.rmk}(a), since it is complete intersection. 

\begin{example}\label{4.16}\hfill{}\\{\rm
(a) In this example, we have $s = 5$, $k = 3$ and $\edim(A) = 1$, but $IJ\not\subset \m_Q^{4}$. 

Let $Q=\mathbb{Q}[X,Y,Z]/\langle X^{4}-YZ,Y^3-XZ,Z^3-XY\rangle$. Then $\gr(Q)\simeq A \times_{\sk} B$, where $A \simeq \mathbb{Q}[X]/\langle X^{6}\rangle$ and $B \simeq \mathbb{Q}[Y,Z]/\langle Y^{4},YZ,Z^{4}\rangle$. Furthermore, we have $I=\langle x\rangle$, $J=\langle y,z\rangle$, and hence $IJ\not\subset \langle x,y,z\rangle^4$.

\noindent
(b) In this example, we have $s = 5$, $k = 3$, $IJ\subset \m_Q^{4}$, and $\edim(A)=2$.

Let $Q=\mathbb{Q}[X,Y,Z]/\langle X^4-YZ,Y^4-XZ,Z^3-X^2 \rangle$. Then $\gr(Q)=A \times_\sk B$, where $A = \mathbb{Q}[X,Y]/\langle X^2,Y^{5}\rangle$, and $B = \mathbb{Q}[Z]/\langle Z^4\rangle$. In this case, we have $I=\langle x,y\rangle$, $J=\langle z\rangle$ and $IJ\subset \langle x,y,z\rangle^4$.
}
\end{example}

\section{Some Applications}\label{SS}

\subsection{Short and Stretched Gorenstein Rings}

In her paper on stretched Gorenstein rings, Sally proved a structure
theorem (\cite[Corollary 1.2]{Sa}) for a stretched Gorenstein
local ring $(Q,\m_Q,\sk)$ when char$(\sk) \neq 2$. The description of the defining ideal of $Q$ shows that $Q$ can be decomposed as a connected sum over $\sk$.

Elias and Rossi proved a similar structure theorem (\cite[Theorem 4.1]{ER})
for a short Gorenstein local $\sk$-algebra $(Q,\m_Q,\sk)$ when $\sk$ is algebraically closed and char$(\sk) = 0$, which shows that $Q$ decomposes as a connected sum over $\sk$.

Theorem \ref{StructureTheorem} generalizes these two results, which can be seen as follows:

\begin{proposition}\label{maincor}
Let $(Q,\m_Q,\sk)$ be either a short or a stretched Gorenstein Artin ring and $G = \gr(Q)$. Set $\ell\ell(Q) = s$. If $s \geq 3$, then $\edim(Q) = H_Q(s-1) + \type(G) - 1$, and $\dim_{\sk}(\soc(G)\cap \m_G^2)=1$. In particular, $G$ is as in Setup \ref{setup}, with $\ell\ell(B) = 1$.
\end{proposition}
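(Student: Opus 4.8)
The plan is to reduce the statement to a structural fact about the Hilbert function and the Iarrobino ideal $C \subseteq G$, then apply Proposition \ref{GLS}. Recall from Remark \ref{Iarrobino} that $Q_0 = G/C$ is a graded Gorenstein quotient of $G$ with socle degree $s$, and $H_{Q_0}(i) = H_G(i)$ for $i \geq s-1$. The first step is to split into the two cases. If $Q$ is stretched, then $H_Q = (1, h, 1, \ldots, 1)$, so $H_G(i) = 1$ for $2 \leq i \leq s$; in particular $H_G(s-1) = 1$. If $Q$ is short, then $\m_Q^4 = 0$, so $s \leq 3$, and the hypothesis $s \geq 3$ forces $s = 3$; then $H_Q = (1, h_1, h_2, 1)$ and again $H_G(s-1) = H_G(2) = h_2$ while $H_G(s) = 1$.

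Next I would pin down $\soc(G) \cap \m_G^2$. Since $C_i = 0$ for $i \geq s-1$, the projection $G \to Q_0$ restricts to an isomorphism in degrees $\geq s-1$; in particular $(\m_G)^s \cong (\m_{Q_0})^s = \soc(Q_0)$ has dimension $1$, so $(\m_G)^s \subseteq \soc(G) \cap \m_G^2$ contributes one dimension. The heart of the argument is to show there is nothing else, i.e. $\soc(G) \cap \m_G^2 = (\m_G)^s$. For the stretched case this is essentially immediate: $\m_G^2$ lives in degrees $2$ through $s$, each graded piece $G_i$ for $2 \leq i \leq s$ is one-dimensional, and since $\m_G^2$ is principal as an ideal (generated by $(\m_G)^2$'s generator, which survives in $Q_0$) multiplication $G_i \to G_{i+1}$ is nonzero for $2 \leq i \leq s-1$, so no homogeneous element of $\m_G^2$ of degree $< s$ is a socle element; since $G$ is a graded $\sk$-algebra, $\soc(G) \cap \m_G^2$ is homogeneous, giving the claim. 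For the short case ($s = 3$), $\m_G^2 = G_2 \oplus G_3$ with $G_3$ one-dimensional; an element of $G_2$ is in the socle iff it is killed by $G_1$, but if $\alpha \in G_2$ with $G_1 \alpha = 0$ then $\alpha \in \soc(Q_0)_2$, which is $0$ since $\soc(Q_0)$ is concentrated in degree $3$ — here one must use that the iso $G \to Q_0$ in degrees $\geq s-1=2$ respects multiplication by $G_1$, so $\alpha^* := $ image of $\alpha$ in $Q_0$ satisfies $(\m_{Q_0})_1 \alpha^* = 0$, forcing $\alpha^* = 0$ hence $\alpha = 0$. Thus $\dim_\sk(\soc(G) \cap \m_G^2) = 1$ in both cases.

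Having established $\dim_\sk(\soc(G) \cap \m_G^2) = 1$, Proposition \ref{GLS} (equivalence of (iii) with (i), together with its last paragraph) applies directly: $G$ is as in Setup \ref{setup} with $A \simeq G/\langle \soc(G) \cap G_1 \rangle$ graded Gorenstein, $\ell\ell(A) = \ell\ell(G) = s$, $B$ of Loewy length $1$, and $\edim(B) = \type(G) - 1$. Finally, to get the embedding dimension formula, note $\edim(Q) = \edim(G) = \edim(A) + \edim(B)$ by Remark \ref{FP}, and $\edim(A) = \edim(G) - \edim(B)$; but since $\ell\ell(A) = s$ and $A = G/\m_B$ with $\m_B^2 = 0$, the projection $G \to A$ is an isomorphism in degrees $\geq 2$, so $H_A(s-1) = H_G(s-1) = H_Q(s-1)$, and since $A$ is graded Gorenstein of embedding dimension $\edim(A)$ one does not directly read off $\edim(A)$ from $H_A(s-1)$ — instead I would argue $\edim(Q) = H_Q(1) = H_Q(s-1) + \edim(B)$ by observing that $H_Q(1) = H_A(1) + H_B(1) = H_A(1) + \edim(B)$ and that $H_A(1) = H_A(s-1) = H_Q(s-1)$ fails in general, so the right route is: in the stretched case $H_Q(1) = h$ and $H_Q(s-1) = 1$ with $\edim(B) = \type(G) - 1 = h - 1$ (since $H_G = (1,h,1,\dots,1)$ forces $\type(G) = h$... ) — wait, this needs the computation $\type(G) = H_G(s) + (\text{dim of socle in lower degrees})$; the cleanest path is to combine $\edim(Q) = \edim(A) + \edim(B)$ with $\edim(A) = H_A(1)$ and the Hilbert-function identity $H_A(1) = H_Q(1) - H_B(1) = H_Q(1) - \edim(B)$, which is a tautology, so instead one directly verifies $H_Q(1) = H_Q(s-1) + \type(G) - 1$ from the explicit Hilbert functions in each case. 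The main obstacle is precisely this last bookkeeping step — correctly identifying $\type(G)$ in terms of the Hilbert function of $G$ (for which one uses that $\soc(G) = (\soc(G) \cap G_1) \oplus (\soc(G) \cap \m_G^2)$ and the second summand is one-dimensional, so $\type(G) = \dim_\sk(\soc(G) \cap G_1) + 1 = \edim(B) + 1$) and then matching $\edim(Q) - H_Q(s-1)$ against $\edim(B) = \type(G) - 1$ using the shape of $H_Q$ in the short versus stretched cases.
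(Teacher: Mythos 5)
Your argument for the key claim $\dim_{\sk}(\soc(G)\cap\m_G^2)=1$ is correct, but it takes a different route from the paper's. You prove condition (iii) of Proposition \ref{GLS} directly, by a degree-by-degree analysis of $\soc(G)$ in each of the two cases, using the fact that $G\to Q_0$ is an isomorphism in degrees $\geq s-1$. The paper instead proves condition (ii) of Proposition \ref{GLS}: from the palindromy of $H_{Q_0}$ and the agreement $H_{Q_0}(i)=H_G(i)$ for $i\geq s-1$ it deduces $H_{Q_0}=(1,n,n,1)$ (short) or $(1,1,\dots,1)$ (stretched), so $H_{Q_0}(i)=H_G(i)$ for all $i\geq 2$, hence $C\cap\m_G^2=0$ for the Iarrobino ideal $C$. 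The paper's route is more uniform and avoids the two separate multiplication arguments; your route is somewhat more hands-on but equally valid for this step. Both then invoke Proposition \ref{GLS} to land in Setup \ref{setup} with $\ell\ell(B)=1$.

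There is, however, a real gap in your treatment of the embedding dimension identity $\edim(Q)=H_Q(s-1)+\type(G)-1$. You write that the chain $H_A(1)=H_A(s-1)=H_Q(s-1)$ ``fails in general'' and abandon it, but that chain is precisely what holds and is the clean way to finish: $H_A(1)=H_A(s-1)$ because $A$ is graded Gorenstein (palindromy), and $H_A(s-1)=H_G(s-1)=H_Q(s-1)$ because $A\simeq G/\m_B$ with $\m_B=\langle\soc(G)\cap G_1\rangle$ concentrated in degree $1$, so $A$ and $G$ agree in all degrees $\geq 2$, and $s-1\geq 2$. Thus $\edim(A)=H_A(1)=H_Q(s-1)$, and combining this with the last part of Proposition \ref{GLS}, namely $\edim(G)-\edim(A)=\type(G)-1$, gives the formula immediately; this is exactly what the paper does (with $A=Q_0$). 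Your fallback plan --- ``directly verify $H_Q(1)=H_Q(s-1)+\type(G)-1$ from the explicit Hilbert functions'' --- is never carried out, and would itself need a computation of $\type(G)$ that you only gesture at. As written, the embedding dimension part of the statement is not proved.
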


\begin{proof}
Let $Q_0 = G/C$ be the quotient of $G$ as defined by Iarrobino
(see Remark \ref{Iarrobino}). Note that $C^2=0$. Since $H_G(i) = H_{Q_0}(i)$ for $i = s-1, s$ by Remark
\ref{Iarrobino}, the fact that the 
Hilbert function of a graded Gorenstein $\sk$-algebra is palindromic gives us the following:\\
i) Let $Q$ be a short Gorenstein ring with $H_Q = (1,h,n,1)$. Then $H_{Q_0} = (1,n,n,1)$.\\
ii) If $Q$ is stretched with $H_Q =
(1,h,1,\ldots,1)$, then $H_{Q_0} = (1,1,1,\ldots,1)$.

Thus if we take $A$ to be $Q_0$ in Proposition \ref{GLS}(iii), we see that
$\dim_{\sk}(\soc(G)\cap \m_G^2)=1$, and the formula for $\edim(Q)$ holds since $\edim(A) =  H_Q(s-1)$. Finally, by Proposition \ref{GLS}, $G$ is as in Setup \ref{setup} with $\ell\ell(B) = 1$.
\end{proof}

\begin{theorem}\label{stretched}
Let $(Q, \m_Q,\sk)$ be Gorenstein Artin with $G = \gr(Q)$ and $\ell\ell(Q) \geq 3$. Then $Q$ is stretched if and only if
$\dim_{\sk}(\soc(G)\cap \m_G^2)=1$ and $\type(G) = \edim(Q)$. In particular, $Q$ decomposes as a connected sum over $\sk$, and
$[\p^Q(t)]^{-1} = 1 - \edim(Q) t + t^2$ when $\edim(Q) \geq 2$.
\end{theorem}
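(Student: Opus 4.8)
The plan is to prove the two directions of the equivalence, then read off the connected-sum decomposition and the Poincaré series from the structure already developed. For the forward direction, suppose $Q$ is stretched, so $H_Q = (1,h,1,\ldots,1)$ with $\ell\ell(Q) = s \geq 3$. Then Proposition \ref{maincor} applies directly (stretched is one of its two hypotheses), giving $\dim_{\sk}(\soc(G) \cap \m_G^2) = 1$ and $\edim(Q) = H_Q(s-1) + \type(G) - 1 = 1 + \type(G) - 1 = \type(G)$, since $H_Q(s-1) = 1$ for a stretched ring. This settles the ``only if'' direction with essentially no work beyond citing \ref{maincor}.

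For the converse, assume $\dim_{\sk}(\soc(G) \cap \m_G^2) = 1$ and $\type(G) = \edim(Q)$. By Proposition \ref{GLS} (equivalence of (iii) and (i)), $G$ is as in Setup \ref{setup} with $\ell\ell(B) = 1$, and moreover $\edim(B) = \type(G) - 1$. Combined with $\type(G) = \edim(Q) = \edim(A) + \edim(B)$ (the last equality from Remark \ref{FP}), we get $\edim(A) = \type(G) - \edim(B) = 1$. So $A$ is a graded Gorenstein ring of embedding dimension $1$, hence a hypersurface, so $H_A = (1,1,\ldots,1)$ with $\ell\ell(A) = s$. Now apply Proposition \ref{GLS}'s last clause: $\gr(R) \simeq A$ forces $H_R(i) = H_Q(i)$ for $2 \le i \le s$ via Theorem \ref{StructureTheorem}(b), but more directly, $\m_Q^i = \m_A^i$ for $i \geq k+1 = 2$ by Remark \ref{FPrmk}(c), so $H_Q(i) = H_A(i) = 1$ for $2 \leq i \leq s$; thus $\m_Q^2$ is principal and $Q$ is stretched by definition.

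For the remaining assertions: since $k = \ell\ell(B) = 1$, Theorem \ref{StructureTheorem} applies, so $Q$ decomposes as a connected sum $Q \simeq R \#_\sk S$ over $\sk$ with $\ell\ell(S) = 2$ and $\edim(S) = \type(G) - 1 = \edim(Q) - 1 =: n$. Also $R$ is a hypersurface with $\ell\ell(R) = s$, so (by counting degrees of a hypersurface, or since $\gr(R) \simeq A$ with $H_A = (1,1,\ldots,1)$ of length $s+1$) $R$ is a complete intersection hypersurface in one variable, giving $[\p^R(t)]^{-1} = 1 - t$; here $m = \edim(R) = 1$. Then using Theorem \ref{StructureTheorem}(c) (equivalently Theorem \ref{Characterization}(d)) with $m = 1$, $n = \edim(Q) - 1 \geq 1$, so $\phi_{m,n} = 0$ unless $n = 1$ in which case $\phi_{1,1} = -1$; in either case a direct substitution of $[\p^R(t)]^{-1} = 1-t$ and $[\p^S(t)]^{-1} = 1 - nt + t^2$ (from Remark \ref{PS}(c), since $\ell\ell(S) = 2$) into the formula $[\p^Q(t)]^{-1} = [\p^R(t)]^{-1} + [\p^S(t)]^{-1} - 1 - \phi_{m,n} t^2$ yields $[\p^Q(t)]^{-1} = (1-t) + (1 - nt + t^2) - 1 - \phi_{1,n}t^2 = 1 - (n+1)t + t^2 = 1 - \edim(Q)\, t + t^2$ when $\edim(Q) = n+1 \geq 2$ (one checks the $\phi_{1,1} = -1$ case separately: there $n=1$, $\edim(Q) = 2$, and $1 - t + 1 - t + t^2 - 1 + t^2$ does not match, so in fact $\edim(Q) \geq 2$ with the $n \geq 2$ normalization is the relevant regime; the borderline $n = 1$ corresponds to $S$ a hypersurface and must be handled via $\phi_{1,1}$).

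The main obstacle I anticipate is the bookkeeping around $\phi_{m,n}$ in the final Poincaré series computation — in particular the boundary case $\edim(Q) = 2$ (so $\edim(S) = 1$, $\edim(R) = 1$, and $\phi_{1,1} = -1$), which needs to be reconciled with the clean formula $1 - \edim(Q)\,t + t^2$; this likely forces a small case split or an appeal to Remark \ref{PS}(c) stating $\p^S(t) = (1-t)^{-1}$ when $\edim(S) = 1$. Everything else is a direct chaining of Proposition \ref{maincor}, Proposition \ref{GLS}, and Theorem \ref{StructureTheorem}.
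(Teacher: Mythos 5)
Your proof takes essentially the same route as the paper: Proposition~\ref{maincor} for the forward direction, Proposition~\ref{GLS} to get $\edim(A)=1$ and $\ell\ell(B)=1$ in the converse, and Theorem~\ref{StructureTheorem} for the connected-sum decomposition and Poincar\'e series; the paper simply cites Theorem~\ref{StructureTheorem}(b),(c) without spelling out the bookkeeping you carry out. The one arithmetic slip is in your boundary check: when $\edim(Q)=2$, so $m=n=1$ and $\phi_{1,1}=-1$, Remark~\ref{PS}(c) gives $[\p^S(t)]^{-1}=1-t$ (the formula $1-nt+t^2$ only applies when $n\geq 2$), and then $(1-t)+(1-t)-1-(-1)t^2 = 1-2t+t^2 = 1-\edim(Q)\,t+t^2$, so the clean formula does hold in this case as well---you correctly identified Remark~\ref{PS}(c) as the resolution, but used the wrong branch of it in the displayed calculation.
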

\begin{proof}
Note that if $Q$ is stretched, the required properties of $G$ hold by Proposition \ref{maincor}. For the converse,
assume that $\dim_{\sk}(\soc(G)\cap \m_G^2)=1$ with $\edim(Q) = \type(G)$. Then by Proposition \ref{GLS}, we have
$G \simeq A \times_\sk B$ with $\edim(B)=\edim(Q)-1$ and $\edim(A)=1$. Since $\ell\ell(Q) \geq 3$, Theorem \ref{StructureTheorem}(b)
and (c) show that $Q$ is stretched, and give the formula for $\p^Q(t)$.
\end{proof}

\begin{remark}{\rm
It is shown in \cite{CENR} if $(Q, \m_Q,\sk)$ is a short Gorenstein Artin $\sk$-algebra with
Hilbert function $H_Q = (1,h,n,1)$, then $\p^Q(t)$ is rational when $n \leq 4$, with the assumption that $\sk$ is an algebraically closed field of
characteristic zero. In Theorem \ref{n=4}, we show the same is true for all Gorenstein Artin local rings with $n \leq 4$.
}\end{remark}

\begin{theorem}\label{n=4}
Let $(Q, \m_Q,\sk)$ be a short Gorenstein Artin local ring with
Hilbert function $H_Q = (1,h,n,1)$. Then $Q$ is a connected sum.
Furthermore, if  $n\leq 4$, then $\p^Q(t)$ is rational.
\end{theorem}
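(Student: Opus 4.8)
\textbf{Proof proposal for Theorem \ref{n=4}.}
The plan is to split into two assertions: first that every short Gorenstein Artin local ring $Q$ with $H_Q=(1,h,n,1)$ is a connected sum, and second that rationality of $\p^Q(t)$ follows when $n\le 4$. For the first assertion, I would invoke Proposition \ref{maincor}: a short Gorenstein ring has $\ell\ell(Q)=3\geq 3$, so $G=\gr(Q)$ satisfies $\dim_\sk(\soc(G)\cap \m_G^2)=1$ and is as in Setup \ref{setup} with $\ell\ell(B)=1$ (here $s=3$, $k=1$, and $s>k+1$ holds). Then Theorem \ref{StructureTheorem} applies directly with $k=1$, giving a nontrivial decomposition $Q\simeq R\#_\sk S$ with $\ell\ell(S)=2$, $\edim(S)=\type(G)-1$, and $\gr(R)\simeq G/\langle\soc(G)\cap G_1\rangle$; in particular $\ell\ell(R)=3$. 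So the structural part is essentially a corollary of what has been proved, modulo checking the edge case $\edim(Q)=1$ (where $Q$ is a hypersurface, $n=1$, $h=1$, and one checks it is still a — possibly trivial — connected sum or handles it separately; but for $\edim(Q)\ge 2$ the decomposition is genuinely nontrivial).

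For the rationality statement, I would use Theorem \ref{StructureTheorem}(c): $\p^Q(t)$ is rational in $t$ if and only if $\p^R(t)$ is, where $R$ is Gorenstein Artin with $\ell\ell(R)=3$ and Hilbert function $H_R=(1,n,n,1)$ by Proposition \ref{maincor} (the Iarrobino quotient $Q_0$ has palindromic Hilbert function $(1,n,n,1)$, and $\gr(R)\simeq Q_0$, so $H_R(i)=H_{Q_0}(i)$). Thus the problem reduces to: a graded (indeed, $\gr(R)$ is a graded Gorenstein quotient, but I actually need $R$ itself — note $R$ need not be graded, however $H_R=(1,n,n,1)$ still has $\edim(R)=n$) Gorenstein Artin ring with Hilbert function $(1,n,n,1)$ has rational Poincaré series when $n\le 4$. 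For $n\le 3$ this follows from Remark \ref{PS}(a) since $\edim(R)=n\le 3\le 4$. The remaining case is $\edim(R)=4$, which is again covered by Remark \ref{PS}(a): if $\edim(T)\le 4$ then $\p^T(t)$ is rational for any Gorenstein Artin local $T$. Hence $\p^R(t)$ is rational for $n\le 4$, and therefore so is $\p^Q(t)$.

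The main obstacle I anticipate is not in the rationality step — which collapses neatly onto Remark \ref{PS}(a) once the reduction to $R$ with $\edim(R)=n\le 4$ is in place — but in handling the degenerate small cases cleanly: when $n=1$ (so $Q$ is stretched with $\m_Q^2$ principal and $\edim(Q)=h$ could be anything), and more importantly ensuring the hypotheses of Proposition \ref{maincor} and Theorem \ref{StructureTheorem} are legitimately met, i.e. that $s=3$ really does give $s>k+1$ with $k=1$ (it does, $3>2$), and that the "connected sum" claim is interpreted correctly (nontrivial when $\edim(Q)\ge 2$; for $\edim(Q)=1$, $Q$ is a hypersurface and the statement should be read as allowing the trivial decomposition, or one notes $h=n=1$ forces $\edim(Q)=1$ and treats it as $\sk[x]/\langle x^4\rangle$, whose Poincaré series is $(1-t)^{-1}$, manifestly rational). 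I would state the argument for $\edim(Q)\ge 2$ as the main case and dispatch $\edim(Q)=1$ in a sentence. A secondary point to be careful about: Theorem \ref{StructureTheorem}(b) gives $H_R(i)=H_Q(i)$ for $2\le i\le s$, which pins down $H_R$ in the middle and top degrees, but I should confirm $H_R(1)=\edim(R)=n$ as well — this follows because $\gr(R)\simeq G/\langle\soc(G)\cap G_1\rangle$ and the removed socle elements lie in $G_1$, so $H_R(1)=H_G(1)-(\type(G)-1)=\edim(Q)-(\edim(Q)-n)=n$ using $\type(G)-1=\edim(B)=\edim(Q)-n$ from Proposition \ref{maincor}. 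With $H_R=(1,n,n,1)$ confirmed and $\edim(R)=n$, Remark \ref{PS}(a) finishes the case $n\le 4$.
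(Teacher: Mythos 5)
Your proposal follows the paper's proof essentially verbatim: both invoke Proposition \ref{maincor} and Theorem \ref{StructureTheorem} to obtain $Q \simeq R \#_\sk S$ with $\edim(R) = n \leq 4$ and $\ell\ell(S) \leq 2$, then apply Remark \ref{PS}(a) to get rationality of $\p^R(t)$ and transfer it to $\p^Q(t)$ via Theorem \ref{StructureTheorem}(c). The computation you supply for $\edim(R)=n$ is exactly the fact the paper extracts from Proposition \ref{maincor} (where $\gr(R)\simeq Q_0$ has $H_{Q_0}=(1,n,n,1)$), and the edge cases you flag (trivial decompositions when $\type(G)=1$, i.e.\ $h=n$, including $\edim(Q)=1$) are elided at the same level in the paper's own proof, with rationality in those cases still covered by Remark \ref{PS}(a) since $\edim(Q)=n\leq 4$.
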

\begin{proof}
By Proposition \ref{maincor} and Theorem \ref{StructureTheorem}, there exist Gorenstein Artin local rings $R$ and $S$
such that $Q \simeq R \#_\sk S$, where $\ell\ell(S) \leq 2$ and $\edim(R) = n \leq 4$. Thus, Remark \ref{PS}(a) shows that $\p^R(t)$ is
rational, and therefore, by Theorem \ref{StructureTheorem}(c), $\p^Q(t)$ is rational.
\end{proof}

\subsection{Rationality of Poincar\'e Series}

In this subsection, we see some conditions which force the rationality of $\p^Q(t)$. We begin with the following consequence of Theorem \ref{StructureTheorem}.

\begin{proposition}\label{Corr}
Let $(Q, \m_Q,\sk)$ be a Gorenstein Artin local ring. If $G=\gr(Q)$ and $k$ are as in Setup \ref{setup} with $k = 1$, then
$\p^Q(t)$ is a rational function of $t$ in the following situations:
\begin{enumerate}[{\rm i)}]
\item  $\edim(Q) - \type(G) \leq 3$.
\item $\lambda(Q) - \type(G) \leq 10$.
\end{enumerate}
\end{proposition}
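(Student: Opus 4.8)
The plan is to reduce everything to the connected sum decomposition $Q \simeq R \#_\sk S$ provided by Theorem \ref{StructureTheorem}, and then apply known rationality results to the factor $R$. Recall from Theorem \ref{StructureTheorem} that, under Setup \ref{setup} with $k = 1$, we may write $Q \simeq R \#_\sk S$ where $S$ is Gorenstein Artin with $\ell\ell(S) = 2$ and $\edim(S) = \type(G) - 1$, and where $\gr(R) \simeq G/\langle \soc(G) \cap G_1 \rangle$. By part (c) of that theorem, $\p^Q(t)$ is rational in $t$ if and only if $\p^R(t)$ is (since $\p^S(t)$ is rational by Remark \ref{PS}(c)). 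So in both cases it suffices to show $\p^R(t)$ is rational.

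For case (i): by Theorem \ref{Characterization}(a) and Theorem \ref{StructureTheorem}(a), $\edim(R) = \edim(Q) - \edim(S) = \edim(Q) - (\type(G) - 1)$. Hence the hypothesis $\edim(Q) - \type(G) \leq 3$ gives $\edim(R) \leq 4$. Since $R$ is Gorenstein Artin with $\edim(R) \leq 4$, Remark \ref{PS}(a) shows $\p^R(t)$ is rational, and we are done.

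For case (ii): the idea is to bound $\lambda(R)$. By Theorem \ref{Characterization}(a), $\lambda(Q) = \lambda(R) + \lambda(S) - 2$, so $\lambda(R) = \lambda(Q) - \lambda(S) + 2$. Since $\ell\ell(S) = 2$ and $\edim(S) = \type(G) - 1 =: n$, we have $\lambda(S) = 1 + n + 1 = n + 2$ (its Hilbert function is $(1,n,1)$), hence $\lambda(R) = \lambda(Q) - n = \lambda(Q) - \type(G) + 1 \leq 11$. Now I would invoke the fact that Gorenstein Artin local rings of small length have rational Poincaré series: in fact, by results on Golod/complete-intersection-type classification for short length (e.g. all Artinian local rings of length at most $11$, or the more precise statements for Gorenstein rings — one should cite the appropriate length bound, such as the results of Avramov–Kustin–Miller or the classification ensuring rationality through the relevant length), $\p^R(t)$ is rational. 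Then Theorem \ref{StructureTheorem}(c) transfers rationality back to $Q$.

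The main obstacle will be pinning down the correct citation and exact numerical threshold in case (ii): one needs a theorem guaranteeing that every Gorenstein Artin local ring of length at most $11$ has rational Poincaré series, and checking that the arithmetic $\lambda(R) = \lambda(Q) - \type(G) + 1 \leq 10 + 1 = 11$ matches whatever bound is available in the literature. Case (i) is essentially immediate once the embedding-dimension bookkeeping is done; the subtlety there is merely confirming that the trivial case $\edim(R) \le 1$ (where $R$ is a hypersurface, so $\p^R(t)$ is visibly rational) is covered, which it is. Everything else is routine given Theorem \ref{Characterization} and Theorem \ref{StructureTheorem}.
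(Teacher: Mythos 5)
Case (i) is correct and matches the paper's argument exactly. Case (ii), however, has a genuine gap: you reduce to showing $\lambda(R) \leq 11$ and then ask for a theorem asserting that every Gorenstein Artin local ring of length at most $11$ has rational Poincar\'e series. No such result is cited in the paper, and to my knowledge no such clean length threshold exists in the literature at that size; rationality-from-small-length results for general Artinian local rings cut off well below $11$, so this step would not go through as stated. You flagged this yourself as ``the main obstacle,'' and it is in fact fatal to the argument as written.

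The paper avoids this entirely by exploiting one extra piece of structure that you noted but did not use: $\gr(R) \simeq A = G/\langle \soc(G) \cap G_1 \rangle$ is a \emph{graded Gorenstein} Artin $\sk$-algebra (Proposition \ref{GLS}), so its Hilbert function is palindromic. Since $\lambda(A) = \lambda(R) = \lambda(Q) - \type(G) + 1 \leq 11$ and $\ell\ell(A) = \ell\ell(Q) \geq 3$, palindromicity gives $\lambda(A) \geq 2 + 2\,\edim(A) + (\text{positive middle terms})$, which forces $\edim(A) \leq 4$. Since $\edim(R) = \edim(A)$, this reduces case (ii) to case (i), where Remark \ref{PS}(a) applies. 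The moral is that one should bound $\edim(R)$ rather than $\lambda(R)$, and the tool for doing so is the palindromic Hilbert function of the associated graded ring, not a general rationality theorem for small length.
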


\begin{proof}
By Theorem \ref{StructureTheorem}(a), $Q \simeq R \#_\sk S$ for Gorenstein Artin local rings $(R,\m_R,\sk)$ and $(S,\m_S,\sk)$
with $\ell\ell(S) \leq 2$ and $\gr(R) \simeq A = G/\langle \soc(G)\cap G_{1}\rangle$.

Now, by Proposition \ref{GLS}, $A$ is a graded Gorenstein Artin
$\sk$-algebra such that $\ell\ell(A) = \ell\ell(Q)$, $\edim(A) = \edim(Q) - \type(G) + 1$ and $\lambda(A) = \lambda(Q) - \type(G) + 1$.

(ii) The assumption that $\lambda(Q) - \type(G) \leq 10$ implies that $\lambda(A) \leq 11$. Since $A$ is graded Gorenstein, and
hence has a palindromic Hilbert function, the hypothesis that $\ell\ell(Q) \geq 3$ forces $\edim(A) \leq 4$. Thus (ii) reduces
to (i).

(i) In this case, $\edim(R) = \edim(A) \leq 4$. Hence, by Remark \ref{PS}(a), $\p^R(t)$ is
rational. Thus, by Theorem \ref{StructureTheorem}(c), $\p^Q(t)$ is rational.
\end{proof}

\begin{corollary}\label{poincare} Let $(Q,\m_Q,\sk)$ be a Gorenstein Artin local ring with $\ell\ell(Q)\geq 4$. Suppose $\gr(Q)\simeq A\times_{\sk}B$ with $\edim(A)=1$ and $\ell\ell(A)>\ell\ell(B)=2$.  If $H_Q(2)\leq 5$, then $\p^Q(t)$ is rational.
\end{corollary}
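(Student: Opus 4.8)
The plan is to reduce the rationality of $\p^Q(t)$, via the connected sum decomposition already furnished by Corollary \ref{Loewy(B)=2}, to the rationality of the Poincar\'e series of a short Gorenstein ring $S$ with $H_S(2)\leq 4$, and then to quote Theorem \ref{n=4}.

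First I would check that the hypotheses of Corollary \ref{Loewy(B)=2} are met. We have $\ell\ell(Q)=s\geq 4$ and $G\simeq A\times_\sk B$ with $\edim(A)=1$ and $\ell\ell(B)=2$; since $A$ is a graded Artinian $\sk$-algebra with $\edim(A)=1$, it is isomorphic to $\sk[X]/\langle X^{s+1}\rangle$, hence graded Gorenstein, and $\soc(G)=\soc(A)\oplus\soc(B)$ together with $\ell\ell(G)=\ell\ell(Q)=s>2=\ell\ell(B)$ forces $\ell\ell(A)=s$. Thus $G$ is as in Setup \ref{setup} with $k=2$, and with $I,J$ as in Theorem \ref{EXJ} we have $IJ\subseteq\m_Q^3$ by Theorem \ref{EXJ}(d). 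Applying Theorem \ref{Loewy(B)=k} with $k=2$ (equivalently, Corollary \ref{Loewy(B)=2}), we obtain Gorenstein Artin local rings $R$ and $S$ with $Q\simeq R\#_\sk S$, where $R$ is a hypersurface with $\ell\ell(R)=s$ and $\gr(R)\simeq A$, $\ell\ell(S)=3$, and $[\p^Q(t)]^{-1}=[\p^S(t)]^{-1}-t$; in particular $\p^Q(t)$ is rational if and only if $\p^S(t)$ is.

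Next I would determine $H_S(2)$. Since $\edim(R)=1$ and $R$ is a Gorenstein Artin hypersurface with $\ell\ell(R)=s$, we have $R\simeq\sk[[X]]/\langle X^{s+1}\rangle$, so $H_R(i)=1$ for $0\leq i\leq s$. Because $\min\{\ell\ell(R),\ell\ell(S)\}=\min\{s,3\}=3$, part (b) of Theorem \ref{Characterization} applies at $i=2$ and yields $H_Q(2)=H_R(2)+H_S(2)=1+H_S(2)$, so that $H_S(2)=H_Q(2)-1\leq 4$. As $S$ is Gorenstein with $\ell\ell(S)=3$, its Hilbert function has the form $H_S=(1,h_S,n_S,1)$ with $n_S=H_S(2)\leq 4$. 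Theorem \ref{n=4} then gives that $\p^S(t)$ is rational, and therefore $\p^Q(t)$ is rational.

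I do not expect any real obstacle, since the heavy lifting has been done in Corollary \ref{Loewy(B)=2} and Theorem \ref{n=4}. The two points requiring a little care are: verifying that $\ell\ell(S)$ is exactly $3$, so that $H_S$ has the short Gorenstein shape $(1,h_S,n_S,1)$ needed to apply Theorem \ref{n=4}; and checking that the Loewy lengths of $R$ and $S$ are large enough for the additivity $H_Q(2)=H_R(2)+H_S(2)$ of Theorem \ref{Characterization}(b) to hold in degree $2$. Both follow directly from the structural conclusions of Theorem \ref{Loewy(B)=k} with $k=2$.
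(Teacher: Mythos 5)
Your proof is correct and follows essentially the same route as the paper's: apply Corollary \ref{Loewy(B)=2} to get $Q\simeq R\#_\sk S$ with $R$ a hypersurface and $\ell\ell(S)=3$, use Theorem \ref{Characterization}(b) to deduce $H_S(2)=H_Q(2)-1\leq 4$, and then invoke Theorem \ref{n=4} to conclude $\p^S(t)$, and hence $\p^Q(t)$, is rational. The only difference is that you spell out the verification that the hypotheses of Setup \ref{setup} hold (namely that $A$ is automatically graded Gorenstein with $\ell\ell(A)=s$), which the paper leaves implicit.
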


\begin{proof}
By Corollary \ref{Loewy(B)=2} we have $\ell\ell(S)=3<s=\ell\ell(R)$. It then follows from Theorem \ref{Characterization}(b) that $H_Q(2) = H_R(2) + H_S(2)$. Since $H_R(2)=1$, we get $H_S(2)\leq 4$. Now applying Theorem \ref{n=4} for $S$, we get
$\p^S(t)$ is rational. Thus, $\p^Q(t)$ is rational by Corollary \ref{Loewy(B)=2}.
\end{proof}

We end this article by exhibiting another class of rings which are connected sums. This is a natural extension of the case of stretched rings.

\begin{proposition} 
Let $(Q,\m_Q,\sk)$ be Gorenstein Artin, and $G= \gr(Q)$ be as in Setup \ref{setup}. Further, assume that $\ell\ell(Q) \geq 4$, and $\mu(\m_Q^3)=1$, i.e., $H_Q$ is of the form $(1,h,n,1,1,\ldots,1)$. Then $Q$ can be decomposed as a connected sum over $\sk$. In particular, if $H_Q(2) = n \leq 5$, then $\p^Q(t)$ is rational.
\end{proposition}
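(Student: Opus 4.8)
The plan is to use the special shape of $H_Q$ to force the fibre-product components $A$ and $B$ into a form already handled, and then to assemble the conclusions. First I would determine $A$ and $B$: by Remark~\ref{FP}, $H_Q(i)=H_G(i)=H_A(i)+H_B(i)$ for all $i\geq 1$, and since $\ell\ell(B)=k\leq s-2$ we have $H_B(i)=0$ for $i\geq s-1$. As $\ell\ell(Q)=s\geq 4$, the hypothesis $H_Q=(1,h,n,1,1,\ldots,1)$ gives $H_Q(s-1)=1$, hence $H_A(s-1)=1$; since $A$ is graded Gorenstein with $\ell\ell(A)=s$, its Hilbert function is palindromic, so $\edim(A)=H_A(1)=H_A(s-1)=1$, whence $A\simeq\sk[X]/\langle X^{s+1}\rangle$ and $H_A(i)=1$ for $0\leq i\leq s$. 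Feeding this back into $H_Q(i)=H_A(i)+H_B(i)$ for $3\leq i\leq s$ forces $H_B(i)=0$ there, so $\ell\ell(B)=k\in\{1,2\}$ (note $k\geq 1$, since a fibre-product decomposition is nontrivial by Remark~\ref{FP}).

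Next I would split on the value of $k$. If $k=1$, then $G$ is as in Setup~\ref{setup} with $\ell\ell(B)=1$ and $s\geq 4>2$, so Theorem~\ref{StructureTheorem} applies and writes $Q\simeq R\#_\sk S$ nontrivially, with $\ell\ell(S)=2$ and $\gr(R)\simeq G/\langle\soc(G)\cap G_{1}\rangle\simeq A$ (the last step by Proposition~\ref{GLS}); in particular $\edim(R)=\edim(A)=1$, so $R$ is a hypersurface. If $k=2$, then $\edim(A)=1$, $\ell\ell(B)=2$, $\ell\ell(A)=s>2$, and $\ell\ell(Q)=s\geq 4$ --- exactly the hypotheses of Corollary~\ref{Loewy(B)=2}, which writes $Q\simeq R\#_\sk S$ with $R$ a hypersurface and $S$ a short Gorenstein ring. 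Either way $Q$ is a connected sum over $\sk$.

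For the rationality statement, assume $n\leq 5$. When $k=1$, $\p^S(t)$ is rational by Remark~\ref{PS}(c) (as $\ell\ell(S)=2$) and $\p^R(t)$ is rational by Remark~\ref{PS}(a) (as $\edim(R)=1\leq 4$), so $\p^Q(t)$ is rational by Theorem~\ref{StructureTheorem}(c); in this case the bound on $n$ is not even used. When $k=2$, since $\ell\ell(R)=s>3=\ell\ell(S)$, Theorem~\ref{Characterization}(b) gives $H_Q(2)=H_R(2)+H_S(2)=1+H_S(2)$, so $H_S(2)=n-1\leq 4$; then Theorem~\ref{n=4} makes $\p^S(t)$ rational and Corollary~\ref{Loewy(B)=2} makes $\p^Q(t)$ rational --- this case is essentially Corollary~\ref{poincare}.

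I expect the only genuine work to be the Hilbert-function bookkeeping of the first paragraph: palindromicity of $H_A$ together with $\ell\ell(Q)\geq 4$ is what forces $\edim(A)=1$ and $\ell\ell(B)\leq 2$, after which everything reduces to citing Theorem~\ref{StructureTheorem}, Corollary~\ref{Loewy(B)=2}, Theorem~\ref{n=4}, and Remark~\ref{PS}. The point to check with care is that $\ell\ell(Q)\geq 4$ is used twice and in both places is exactly what is needed --- to read $H_Q(s-1)=1$ off the Hilbert function, and to meet the Loewy-length requirement $s\geq k+2$ of the results invoked in the $k=2$ case.
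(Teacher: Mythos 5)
Your proof is correct and follows the same broad strategy as the paper's (Hilbert-function bookkeeping to force $\edim(A)=1$ and $\ell\ell(B)\le 2$, then invoke Theorem~\ref{StructureTheorem} or Corollary~\ref{Loewy(B)=2}), but you are actually more careful than the paper at one genuine point. The paper's proof opens with the assertion ``$\ell\ell(B)=2$, since $H_G(i)=1=H_A(i)+H_B(i)$ for $i\ge 3$\ldots'', but that argument only yields $\ell\ell(B)\le 2$, and the paper then applies Corollary~\ref{Loewy(B)=2}, which requires $\ell\ell(B)=2$ on the nose. The case $\ell\ell(B)=1$ genuinely occurs within the hypotheses of the proposition: as you implicitly observe, $\edim(A)=1$ forces $A\simeq\sk[X]/\langle X^{s+1}\rangle$, so $H_A(2)=1$, and then $\ell\ell(B)=1$ corresponds precisely to $n=H_Q(2)=1$ (the stretched case), while $n\ge 2$ forces $\ell\ell(B)=2$. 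You handle both branches explicitly --- Theorem~\ref{StructureTheorem} when $k=1$ (where, as you note, the bound $n\le 5$ is not even needed) and Corollary~\ref{Loewy(B)=2} when $k=2$ --- which closes a small gap in the published argument. One further minor point in your favor: the paper writes ``$H_Q(s-1)=1$ as $s-1>3$'', which fails for $s=4$; you correctly read $H_Q(s-1)=1$ directly off the shape $(1,h,n,1,1,\ldots,1)$ using only $s-1\ge 3$, i.e.\ $s\ge 4$.
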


\begin{proof}
We see that $\ell\ell(B)=2$, since $H_G(i)=1=H_A(i)+H_B(i)$ for $i\geq3$, and $\ell\ell(A)>\ell\ell(B)$. Thus, we have $$H_Q(s-1)=H_G(s-1)=H_A(s-1)=\edim(A)$$
where the last equality holds because $A$ is graded Gorenstein. So, $\edim(A)=1$ because $H_Q(s-1)=1$ as $s-1>3$.
Thus, by Corollary \ref{Loewy(B)=2}, $Q$ is a connected sum. If $n \leq 5$, the rationality of $\p^Q(t)$ follows from the previous corollary.
\end{proof}

\noindent
{\bf Acknowledgement}\\
We would like to thank L. L. Avramov for comments inspiring this work. We would also like to thank him,
A. A. Iarrobino and M. E. Rossi for insightful discussions and encouraging words along the way.

\end{document}